\newcommand{\be}{\begin{eqnarray}}
\newcommand{\ee}{\end{eqnarray}}
\newcommand{\beq}{\begin{equation}}
\newcommand{\eeq}{\end{equation}}
\newcommand{\beqn}{\begin{equation*}}
\newcommand{\eeqn}{\end{equation*}}
\newcommand{\slot}{\,\cdot\,}
\newcommand{\round}[1]{\lfloor#1\rfloor}
\newcommand{\ave}[1]{{\langle#1\rangle}}
\newtheorem{thm}{Theorem}[section]
\newtheorem{cor}[thm]{Corollary}
\newtheorem{lem}[thm]{Lemma}
\newtheorem{defn}[thm]{Definition}
\newtheorem{remark}[thm]{Remark}
\newcommand\cF{{\mathcal F}}
\newcommand\cK{{\mathcal K}}
\newcommand\cM{{\mathcal M}}
\newcommand\cT{{\mathcal T}}
\newcommand\bE{{\mathbb E}}
\newcommand\bN{{\mathbb N}}
\newcommand\bP{{\mathbb P}}
\newcommand\bR{{\mathbb R}}
\newcommand\bS{{\mathbb S}}
\newcommand\rd{{\mathrm d}}
\newcommand\fm{{\mathfrak m}}
\newcommand{\ve}{\varepsilon}
\def\bfT{\mathbf{T}}
\begin{document}

\title{An almost sure ergodic theorem for quasistatic dynamical systems}

\author[Mikko Stenlund]{Mikko Stenlund}
\address[Mikko Stenlund]{
Department of Mathematics and Statistics, P.O.\ Box 68, Fin-00014 University of Helsinki, Finland.}
\email{mikko.stenlund@helsinki.fi}
\urladdr{http://www.math.helsinki.fi/mathphys/mikko.html}

\keywords{Quasistatic dynamical system, ergodic theorem, physical family of measures}

\thanks{2010 {\it Mathematics Subject Classification.} 37D20; 37C40, 37C60} 
 


\begin{abstract}
We prove almost sure ergodic theorems for a class of systems called quasistatic dynamical systems. These results are needed, because the usual theorem due to Birkhoff does not apply in the absence of invariant measures. We also introduce the concept of a physical family of measures for a quasistatic dynamical system. These objects manifest themselves, for instance, in numerical experiments. We then verify the conditions of the theorems and identify physical families of measures for two concrete models, quasistatic expanding systems and quasistatic dispersing billiards.

\end{abstract}

\maketitle


\subsection*{Acknowledgements}
This work was supported by Jane and Aatos Erkko Foundation, and by Emil Aaltosen S\"a\"ati\"o. The author would like to thank Ville Hakulinen for a useful conversation.


\medskip
\section{Introduction}
This paper is about ergodic properties of quasistatic dynamical systems. These are generalizations of conventional dynamical systems, and belong to the realm of deterministic non-equilibrium processes. Such systems, which suffer from the lack of invariant measures, have become a timely research topic due to the recent progress in the study of time-dependent dynamical systems; see~for example\cite{OttStenlundYoung_2009,Stenlund_2011,StenlundYoungZhang_2013,GuptaOttTorok_2013,MohapatraOtt_2014,Aimino_etal_2014}. We define the concept in Section~\ref{sec:QDS} and also describe the problem addressed in this paper. Then, in Section~\ref{sec:families}, we introduce what we call physical families of measures. The latter is a new concept which is particularly suited for making statistical predictions about the long-term behavior of quasistatic dynamical systems.


\medskip
\subsection{Notations and conventions}
Given an integer~$u\ge 0$,
\beqn
\round{u} = \max\{k\in\bN\,:\,k\le u\}
\eeqn
is its integer part.
All measures --- denoted by $\mu$, $\fm$, $\hat\mu$, $\hat\mu_t$, etc --- are probability measures.
All functions~$f:X\to\bR$ and self-maps $T:X\to X$ on a measurable space~$(X,\mathscr{F})$ are measurable. Given a function~$f$ and a measure~$\mu$, we write~$\mu(f)$ for~$\int_X f\,\rd\mu$. All integrals over~$X$ and over an interval~$I\subset [0,1]$ are to be understood in the sense of Lebesgue.


\medskip
\subsection{Quasistatic dynamical systems}\label{sec:QDS}

We now recall the notion of a quasistatic dynamical system, which was introduced in~\cite{DobbsStenlund_2014}. In this paper we restrict to the measurable case because of the type of result we are after. We first give the mathematical definition and then discuss its physical meaning.

\medskip
\begin{defn}\label{defn:QDS}
Let~$(X,\mathscr{F})$ be a measurable space, $\cM$~a topological space whose elements are measurable self-maps $T:X\to X$, and~$\bfT$ a triangular array of the form
\beq\label{eq:array}
\bfT = \{T_{n,k}\in\cM\ :\ 0\le k\le n, \ n\ge 1\} .
\eeq
If there exists a piecewise-continuous curve $\gamma:[0,1]\to\cM$ such that
\beq\label{eq:gamma_limit}
\lim_{n\to\infty}T_{n,\round{nt}} = \gamma_t
\eeq
we say that $(\bfT, \gamma)$ is a (measurable) \emph{quasistatic dynamical system (QDS)} with \emph{state space}~$X$ and \emph{system space}~$\cM$. 
\end{defn}

\medskip
\begin{defn}\label{defn:associated_measure}
It often happens that, given $t\in[0,1]$, the map~$\gamma_t$ will have an invariant probability measure of special interest, the choice of which depends on the question one is posing. We denote such a designated measure by~$\hat\mu_t$ and call it \emph{the measure associated to~$\gamma_t$}.
\end{defn}

\medskip
Given a QDS, the description of the dynamics is as follows: A point~$x$ of the state space~$X$ represents the state of the system. The evolution of~$x$ is given by the triangular array~$\bfT$, separately on each level of the array. That is, given~$n\ge 1$, the point 
\beqn
x_{n,k} = T_{n,k}\circ\dots\circ T_{n,1}(x) \in X
\eeqn
is the state of the system after~$k\in\{0,\dots,n\}$ steps on the~$n$th level of the array~$\bfT$. The convention here is that $x_{n,0} = x$. Our objective is to study the statistical properties of~$(x_{n,k})_{k=0}^n$ in the limit $n\to\infty$. 

We point out that a conventional dynamical system $T:X\to X$ is a fully degenerate instance of a QDS, corresponding to $T_{n,k}=T$ for all $k$ and $n$, in which case $x_{n,k}=T^k(x)$.

Introducing the continuous parameter $t\in[0,1]$ and setting $k = \round{nt}$, the piecewise-constant curve~$t\mapsto T_{n,\round{nt}}$ approximates~$\gamma$ with ever-increasing accuracy in~$\cM$, as $n\to\infty$. It is helpful to think of~$t$ as macroscopic time and, with $n$ fixed, of $k = \round{nt}$ as the corresponding microscopic time. In terms of macroscopic time, we are interested in the statistical properties of~$(x_{n,\round{nt}})_{0\le t\le 1}$ as $n\to\infty$. In the analysis of concrete systems, as is the case with the systems considered in Section~\ref{sec:applications}, the rate of convergence of $T_{n, \round{nt}}$ to~$\gamma_t$ may need to be specified.

Let us now briefly explain the physical motivation behind the definitions. In thermodynamics, the adjective quasistatic is attributed to an idealized process where external influences cause the observed system to transform infinitesimally slowly; the system remains in thermodynamic equilibrium at any instant, but the equilibrium changes slowly over an infinitely long time span. Examples of quasistatic processes include all reversible ones. 
Analogously, a QDS models a situation in which the dynamical system changes infinitesimally slowly due to external influence. This is expressed by~\eqref{eq:gamma_limit}. As measured in microscopic time $k=\round{nt}$, the system~$T_{n,k} \approx \gamma_t$ (roughly) traverses the curve~$\gamma$ from beginning to end, and this happens more and more slowly as~$n$ increases. Despite the slowness of the changes, in interesting situations the system may morph drastically over a sufficiently long microscopic timescale: unless $\gamma$ is a constant curve,~$T_{n,n}\approx \gamma_1$ differs substantially from~$T_{n,1}\approx \gamma_0$ for all large values of~$n$. The instantaneous equilibria of the quasistatic system are described by the invariant measures $\hat\mu_t$ associated to the limit maps $\gamma_t$, $0\le t\le 1$. We have included the possibility of having jumps in~$\gamma$ in order to accommodate for larger, but rare, abrupt changes in the system as well. For instance, these may be jumps from one connected component of the system space~$\cM$ to another.

Recall that~$f$ will always stand for a measurable function $f:X\to\bR$.
We denote
\beq\label{eq:f}
f_{n,k} = f\circ T_{n,k}\circ\dots\circ T_{n,1}, \quad 0\le k \le n.
\eeq
The convention is that $f_{n,0} = f$. We define the functions $S_n:X\times [0,1]\to\bR$ by
\beqn
S_n(x,t) = \int_0^{nt} f_{n,\round{s}}(x)\,\rd s, \quad n\ge 1.
\eeqn
Note that, given~$x$, the function~$S_n(x,\slot)$ is a piecewise linear interpolation of the Birkhoff-type sums~$\sum_{k=0}^{\round{nt}-1}  f_{n,k}(x)$ and, as such, an element of~$C([0,1])$ --- the space of continuous functions from~$[0,1]$ to~$\bR$.

A reasonable question to ask is whether
\beq\label{eq:zeta}
\zeta_n(x,t) = n^{-1} S_n(x,t) = \int_0^{t} f_{n,\round{ns}}(x)\,\rd s
\eeq
converges in some sense. Given an initial distribution~$\mu$ for~$x$, one may view~$\zeta_n$ as a random element of~$C([0,1])$. We equip $C([0,1])$ with the uniform norm and the Borel sigma-algebra.

Note that in the fully degenerate case --- $T_{n,k}=T$ for all~$k$ and~$n$ --- Birkhoff's ergodic theorem guarantees that, given a bounded measurable $f$, $\lim_{n\to\infty}\zeta_n(x,t) = t\int_X f \,\rd\mu$ for almost every~$x$ with respect to an ergodic~$T$-invariant measure~$\mu$, for all~$t\in[0,1]$.

Taking the random viewpoint above, a special class of quasistatic dynamical systems was studied in~\cite{DobbsStenlund_2014}: the system space $\cM$ of the model consists of smooth expanding maps on the circle~$X=\bS$ and the curve $\gamma$ is piecewise H\"older continuous;  see Section~\ref{sec:expanding} below for details. Each map $\gamma_t$ has a unique ergodic Sinai--Ruelle--Bowen (SRB) measure~$\hat\mu_t$ associated to it. Define~$\zeta\in C([0,1])$ by the expression
\beq\label{eq:zeta_limit}
\zeta(t) = \int_0^t \hat\mu_s(f)\,\rd s.
\eeq 
Under the assumptions that~$\mu$ be absolutely continuous and~$f$ be Lipschitz continuous, it was shown in~\cite{DobbsStenlund_2014} that
\beq\label{eq:as}
\lim_{n\to\infty}\zeta_n = \zeta
\eeq
\emph{in distribution}. That is, the law of~$\zeta_n$ on the measurable space $C([0,1])$ converges weakly to the Dirac measure at~$\zeta$.

In this paper we obtain a stronger mode of convergence. We identify conditions for quasistatic dynamical systems under which~\eqref{eq:as} is true \emph{almost surely} with respect to a distribution~$\mu$. We then apply the result to specific models in Section~\ref{sec:applications}, including the one studied in~\cite{DobbsStenlund_2014}.


\medskip
\subsection{Physical families of measures}\label{sec:families}
Let $X$ be a topological space equipped with the Borel sigma-algebra, and let $\fm$ be a reference measure on $X$. The picture to have in mind is that~$\fm$ is associated to~$X$ in a canonical way; for example it could be the phase-space volume. Let $T:X\to X$ be a measurable map with an invariant measure $\hat\mu$. We do \emph{not} require~$\fm$ to be invariant for~$T$.

\pagebreak
Let us recall the concept of a physical measure; see, e.g.,~\cite{Young_2002}.

\medskip
\begin{defn}
Suppose there exists a measurable set $A\subset X$ with $\fm(A)>0$ such that
\beq\label{eq:physical}
\lim_{n\to\infty}n^{-1}\sum_{k=0}^{n-1} f\circ T^k(x) = \hat\mu(f)
\eeq
holds for all $x\in A$ and all bounded continuous functions $f:X\to\bR$. Then we say that~$\hat\mu$ is a \emph{physical measure} for~$T$. Any point $x\in X$ satisfying \eqref{eq:physical} for all bounded continuous~$f$ is called~$\hat\mu$-generic.
\end{defn}

\medskip
We have required $f$ to be bounded in order to ignore integrability issues. In truth, the above definition is an abstract version of the standard one, where $X$ is a compact oriented Riemannian manifold and~$\fm$ is the measure determined by the volume form.

A physical measure~$\hat\mu$ captures the average behavior of $f\circ T^k(x)$, $k\ge 0$, for a significant fraction of initial points~$x$ \emph{in the sense of the reference measure $\fm$}. For instance in a computer experiment the point~$x$ could be drawn from a uniform distribution, and one might observe that, for a positive fraction of such points, the time-average on the left side of~\eqref{eq:physical} tends to the expected value on the right side. Hence~$\hat\mu$ would stand out among all invariant measures as one that is relevant to physical observations. A prototypical example of a physical measure~$\hat \mu$ is the SRB measure on a hyperbolic (axiom~A) attractor of a diffeomorphism~$T$, in which case $A$ is the basin of attraction and $\fm$ is the phase-space volume.

Of course, by Birkhoff's theorem, if~$\fm$ happens to be an ergodic invariant measure for~$T$, then~\eqref{eq:physical} holds for~$\hat\mu=\fm$ and $A\subset X$ with $\fm(A)=1$. In many applications, however, the canonical choice for~$\fm$ will not be invariant, let alone ergodic, as hinted above.

In the context of quasistatic dynamical systems, the notion of a physical measure is generally unnatural. Instead, the following notion of a physical family of measures will turn out to be useful.

\medskip
\begin{defn}\label{defn:physical_family}
Let $\bfT$ be a triangular array as in~\eqref{eq:array}. Let $\mathscr{P} = (\hat\mu_t)_{t\in[0,1]}$ be a one-parameter family of measures on~$X$, and suppose that the map $t\mapsto\hat\mu_s(f)$ is measurable for all bounded continuous functions~$f:X\to\bR$. Suppose there exists a measurable set~$A\subset X$ with $\fm(A)>0$ such that 
\beq\label{eq:physical_QDS}
\lim_{n\to\infty} \int_0^{t} f_{n,\round{ns}}(x)\,\rd s = \int_0^t \hat\mu_s(f)\,\rd s, \quad t\in[0,1],
\eeq
holds for all $x\in A$ and all bounded continuous functions~$f:X\to\bR$. Then we say that~$\mathscr{P}$ is a \emph{physical family of measures} for~$\bfT$. Any point $x\in X$ satisfying \eqref{eq:physical_QDS} for all bounded continuous $f$ is called~$\mathscr{P}$-generic.

\medskip
\end{defn}
We do not distinguish between two physical families of measures $(\hat\mu_t)_{t\in[0,1]}$ and $(\hat\mu'_t)_{t\in[0,1]}$ in case $\hat\mu_t = \hat\mu'_t$ for all but a zero-measure set of parameters~$t$.

The importance of physical families of measures is similar to that of physical measures in the case of a conventional dynamical system.

In Section~\ref{sec:applications} we will identify physical families of measures for concrete quasistatic dynamical systems.


\medskip
\subsection{Structure of the paper}
In Section~\ref{sec:abstract} we present almost sure ergodic theorems for abstract quasistatic dynamical systems. In that section we also present a theorem on the uniqueness of a physical family of measures. In Section~\ref{sec:applications} we give applications of the latter to concrete quasistatic dynamical systems. Together, these sections comprise the main results of the paper. In Section~\ref{sec:preliminary} we prove preliminary lemmas necessary for the proofs of the abstract theorems. The theorems are then proven in Section~\ref{sec:proofs}.


\medskip
\section{Main theorems}\label{sec:abstract}

\medskip
\subsection{Ergodic theorems}
Let us introduce the centered quantity
\beqn
\bar\zeta_n(x,t) = \zeta_n(x,t)-\mu(\zeta_n(\slot,t)).
\eeqn
Note that
\beqn
\bar\zeta_n(x,t) = \int_0^t \bar f_{n,\round{ns}}(x)\,\rd s,
\eeqn
where
\beqn
\bar f_{n,\round{ns}} = f_{n,\round{ns}} - \mu(f_{n,\round{ns}}).
\eeqn

\medskip
We begin with a result for triangular arrays~$\bfT$ which does not presuppose convergence to a curve~$\gamma$. The proof is given in Section~\ref{sec:proofs}.

\medskip
\begin{thm}\label{thm:main1}
Let~$f$ be a bounded function and~$\mu$ a probability measure. Suppose the following conditions hold:
\medskip
\begin{itemize}
\item[(A1)]
There exist a dense set $A\subset[0,1]$ and a function $g:A\to\bR$ such that
\beqn
\lim_{t\to\infty}\mu(\zeta_n(\slot,t)) = g(t), \quad t\in A ; 
\eeqn
\smallskip
\item[(A2)]
There exist a dense set $B\subset[0,1]$ and a number $p\ge 1$ such that
\beqn
\sum_{n=1}^\infty \mu(|\bar\zeta_n(\slot,t)|^p) < \infty, \quad t\in B.
\eeqn
\end{itemize}
\medskip
Then $g$ extends to a Lipschitz continuous function on~$[0,1]$ (denoted by~$g$) such that
\beqn
\lim_{n\to\infty}\sup_{t\in[0,1]}|\zeta_n(x,t)-g(t)| = 0
\eeqn
for almost every $x$ with respect to $\mu$.
\end{thm}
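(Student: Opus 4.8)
The plan is to use the boundedness of $f$ to make the random curves $t\mapsto\zeta_n(x,t)$ Lipschitz with a constant independent of $n$ and $x$, and then to promote the two pointwise hypotheses, stated on \emph{a priori unrelated} dense sets, to the asserted uniform-in-$t$ conclusion. Write $C=\sup_X|f|<\infty$. Since $|f_{n,k}|\le C$ for all $n$ and $k$, the definition $\zeta_n(x,t)=\int_0^t f_{n,\round{ns}}(x)\,\rd s$ gives $|\zeta_n(x,t)-\zeta_n(x,t')|\le C|t-t'|$ for all $x\in X$, $n\ge1$ and $t,t'\in[0,1]$; integrating against $\mu$, the same bound holds for $t\mapsto\mu(\zeta_n(\slot,t))$, and hence $t\mapsto\bar\zeta_n(x,t)$ is $2C$-Lipschitz, again uniformly in $x$ and $n$. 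The elementary fact I will lean on (to be isolated as a preliminary lemma, of Arzel\`a--Ascoli type) is that a sequence of real functions on $[0,1]$ that is uniformly equicontinuous and converges pointwise on a dense subset is uniformly Cauchy, hence converges uniformly on $[0,1]$; moreover, since a uniform limit of $C$-Lipschitz functions on $[0,1]$ is $C$-Lipschitz, the limit function inherits the Lipschitz constant.

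Applying this to the $C$-Lipschitz sequence $t\mapsto\mu(\zeta_n(\slot,t))$ and the dense set $A$ from (A1), the sequence $\mu(\zeta_n(\slot,\slot))$ converges uniformly on $[0,1]$ to a $C$-Lipschitz function that restricts to $g$ on $A$; this is the desired Lipschitz extension, which I continue to denote by~$g$. In particular $\sup_{t\in[0,1]}|\mu(\zeta_n(\slot,t))-g(t)|\to0$. For the centered part, fix $t\in B$; by Tonelli and (A2),
\[
\mu\Bigl(\sum_{n\ge1}|\bar\zeta_n(\slot,t)|^p\Bigr)=\sum_{n\ge1}\mu\bigl(|\bar\zeta_n(\slot,t)|^p\bigr)<\infty,
\]
so $\sum_{n}|\bar\zeta_n(x,t)|^p<\infty$, and in particular $\bar\zeta_n(x,t)\to0$, for $\mu$-a.e.\ $x$. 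Choose a countable set $B_0\subset B$ that is dense in $[0,1]$ (possible since $B$ is dense in the separable space $[0,1]$) and intersect the corresponding countably many full-measure sets: there is $X_0\subset X$ with $\mu(X_0)=1$ such that $\bar\zeta_n(x,t)\to0$ for every $x\in X_0$ and every $t\in B_0$.

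Now fix $x\in X_0$. The sequence $t\mapsto\bar\zeta_n(x,t)$ is $2C$-Lipschitz and converges to $0$ on the dense set $B_0$, so by the equicontinuity lemma it converges to $0$ uniformly on $[0,1]$. Combining with the previous paragraph,
\[
\sup_{t\in[0,1]}|\zeta_n(x,t)-g(t)|\le\sup_{t\in[0,1]}|\bar\zeta_n(x,t)|+\sup_{t\in[0,1]}|\mu(\zeta_n(\slot,t))-g(t)|\longrightarrow0,
\]
and since $\mu(X_0)=1$ this is exactly the assertion of the theorem.

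The only genuine obstacle is that (A1) and (A2) give convergence on the two dense sets $A$ and $B$, which may well be disjoint, so one cannot conclude convergence of $\zeta_n(x,t)$ itself at any single parameter $t$ directly; the uniform Lipschitz bound coming from boundedness of $f$ is precisely what allows each ingredient to be turned into a uniform-in-$t$ statement \emph{before} they are added. A secondary point that must not be skipped is that the $\mu$-null exceptional set produced by the fluctuation estimate has to be made independent of $t$, which is why one first passes to the countable dense set $B_0$; the Lipschitz bound then carries the conclusion from $B_0$ to all of $[0,1]$.
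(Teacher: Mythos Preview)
Your proof is correct and follows essentially the same approach as the paper: uniform Lipschitz bounds from the boundedness of $f$, then Arzel\`a--Ascoli to upgrade (A1) to uniform convergence of the means, and (A2) plus a countable-intersection argument to get $\bar\zeta_n(x,t)\to 0$ uniformly in $t$ for $\mu$-a.e.\ $x$. The only cosmetic differences are that the paper packages the probabilistic upgrade as a standalone lemma (Lemma~\ref{lem:Lip_extension_limit}, which builds the countable structure from finite $\varepsilon$-nets rather than a pre-chosen countable dense $B_0$) and deduces the a.s.\ convergence at fixed $t$ via Markov plus Borel--Cantelli (Lemma~\ref{lem:as_conv}) rather than your Tonelli argument.
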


\medskip
Condition (A1) asks for nice asymptotic behavior of the mean
\beqn
\mu(\zeta_n(\slot,t)) = \int_0^{t} \mu(f_{n,\round{ns}})\,\rd s,
\eeqn
whereas (A2) is a moment condition implying almost sure convergence of $\bar\zeta_n(\slot,t)$ to~$0$.
The role of the sets~$A$ and~$B$ in the theorem is that in some situations it is convenient to restrict to a dense set, such as the one of dyadic rational numbers, instead of the entire interval~$[0,1]$.

\pagebreak
\medskip
In practice, $g=\zeta$ is natural in the context of QDSs. To get a heuristic idea of why this is so, observe first that~\eqref{eq:f} yields $\mu(f_{n,\round{ns}}) = (T_{n,\round{ns}})_*\cdots (T_{n,1})_*\mu(f)$, where $T_*\mu$ denotes the pushforward measure $T_*\mu(A) = \mu(T^{-1}A)$, $A\in\mathscr{F}$. On the other hand, the limit in~\eqref{eq:gamma_limit} and the (piecewise) continuity of the curve~$\gamma$ suggest that
\beqn
\begin{split}
\mu(f_{n,\round{ns}}) & = (T_{n,\round{ns}})_*\cdots (T_{n,\round{n(s-\ve)}+1})_* (T_{n,\round{n(s-\ve)}})_*\cdots (T_{n,1})_*\mu(f)
\\
& \approx (\gamma_s)_*^{\round{n\ve}} (T_{n,\round{n(s-\ve)}})_*\cdots (T_{n,1})_*\mu(f)
\end{split}
\eeqn
for small $\ve>0$ and large~$n$. If $(T_{n,\round{n(s-\ve)}})_*\cdots (T_{n,1})_*\mu$ remains in a class of reasonable measures $\nu$ for all $n$, and if the maps $\gamma_s$ have a memory-loss property $\lim_{m\to\infty}(\gamma_s)_*^m\nu(f) = \hat\mu_s(f)$ for such measures, one can expect $\lim_{n\to\infty}\mu(f_{n,\round{ns}}) = \hat\mu_s(f)$, i.e., $g=\zeta$. Here $\hat\mu_s$ stands for the measure associated to~$\gamma_s$; see Definition~\ref{defn:associated_measure}.

The following result identifies conditions for the conclusion of the previous theorem with $g=\zeta$. We will check these conditions for specific models in Section~\ref{sec:applications}. 

In order to state the result, we need to introduce certain correlation functions. To that end, let~$f$ and~$\mu$ be a given function and a probability measure, respectively. We write 
\beq\label{eq:corr}
c^{\ell,j}_n(k_1,\dots,k_\ell) = \mu(f_{n,k_1}\cdots f_{n,k_\ell}) - \mu(f_{n,k_1}\cdots f_{n,k_j}) \, \mu(f_{n,k_{j+1}}\cdots f_{n,k_\ell})
\eeq
for all integers $\ell\ge 2$, $1\le j<\ell$ and $k_1,\dots,k_\ell\ge 0$. In this paper we will only be interested in $2\le \ell\le 4$ and $j\in\{1,\ell-1\}$. Note that if $c^{\ell,j}_n(k_1,\dots,k_\ell)$ is small, then the products $f_{n,k_1}\cdots f_{n,k_j}$ and $f_{n,k_{j+1}}\cdots f_{n,k_\ell}$ are nearly uncorrelated with respect to the initial distribution~$\mu$.  We also introduce the function~$\Phi:\bR_+\to\bR_+$ defined by
\beqn
\Phi(s) = 
\begin{cases}
s^{-1}(\log s)^{-2}, & s\ge 2,
\\
2^{-1}(\log 2)^{-2}, & 0\le s< 2.
\end{cases}
\eeqn
Note that $\Phi$ is integrable and non-increasing on $\bR_+$.

\medskip
\begin{thm}\label{thm:main2}
Let $(\bfT, \gamma)$ be a QDS and $\hat\mu_t$ the measure associated to $\gamma_t$ for each $t\in[0,1]$.
Let~$f$ be a bounded function and~$\mu$ a probability measure. Suppose the following conditions hold:
\smallskip
\begin{itemize}
\item[(B1)] The map $t\mapsto\hat\mu_t(f)$ is measurable;
\medskip
\item[(B2)] $\lim_{n\to\infty}\mu(f_{n,\round{nt}})=\hat\mu_t(f)$ for almost every $t\in[0,1]$;
\medskip
\item[(B3)] There exists $C>0$ such that
\beqn
|c^{\ell,j}_n(k_1,\dots,k_\ell)| \le C\Phi(k_{j+1}-k_j)
\eeqn
for all integers $2\le \ell \le 4$, $j\in\{1,\ell-1\}$ and $0\le k_1\le\dots\le k_\ell$. 
\end{itemize}
\medskip
Then
\beq\label{eq:uniform_conv}
\lim_{n\to\infty}\sup_{t\in[0,1]}|\zeta_n(x,t)-\zeta(t)| = 0
\eeq
for almost every $x$ with respect to $\mu$.
\end{thm}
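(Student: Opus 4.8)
The plan is to deduce the statement from Theorem~\ref{thm:main1}, applied with dense sets $A=B=[0,1]$, exponent $p=4$, and limit function $g=\zeta$; the task is thus to verify the hypotheses (A1) and (A2). Note first that $\zeta(t)=\int_0^t\hat\mu_s(f)\,\rd s$ is well defined, since $s\mapsto\hat\mu_s(f)$ is measurable by (B1) and bounded by $\|f\|_\infty$, each $\hat\mu_s$ being a probability measure. For (A1), Fubini's theorem gives $\mu(\zeta_n(\slot,t))=\int_0^t\mu(f_{n,\round{ns}})\,\rd s$, with integrand bounded by $\|f\|_\infty$ uniformly in $n$ and $s$; by (B2) the integrand tends to $\hat\mu_s(f)$ for almost every $s$, so dominated convergence yields $\lim_{n\to\infty}\mu(\zeta_n(\slot,t))=\zeta(t)$ for every $t\in[0,1]$. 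Hence (A1) holds with $g=\zeta$.

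The substance of the argument is (A2), i.e.\ $\sum_n\mu(\bar\zeta_n(\slot,t)^4)<\infty$. Expanding the fourth power of $\bar\zeta_n(x,t)=n^{-1}\int_0^{nt}\bar f_{n,\round u}(x)\,\rd u$ and using Fubini, one has
\beqn
\mu(\bar\zeta_n(\slot,t)^4)=n^{-4}\int_{[0,nt]^4}\mu\Big(\textstyle\prod_{i=1}^4\bar f_{n,\round{u_i}}\Big)\,\rd u .
\eeqn
The crux is a lemma asserting that, with $M=\|f\|_\infty$ and $C$ as in (B3), there is a constant $C'=C'(C,M)$ such that for all integers $0\le k_1\le k_2\le k_3\le k_4$,
\beqn
\Big|\mu\big(\bar f_{n,k_1}\bar f_{n,k_2}\bar f_{n,k_3}\bar f_{n,k_4}\big)\Big|\le C'\,\Phi\big(\max(k_2-k_1,\,k_4-k_3)\big).
\eeqn
To prove this I would expand the centered factors $\bar f_{n,k_2},\bar f_{n,k_3},\bar f_{n,k_4}$ into monomials in the iterates, turning $\mu(\bar f_{n,k_1}\cdot\bar f_{n,k_2}\bar f_{n,k_3}\bar f_{n,k_4})$ into a signed sum, over subsets $S\subseteq\{2,3,4\}$, of $\big(\prod_{i\in\{2,3,4\}\setminus S}\mu(f_{n,k_i})\big)$ times $\mu\big(f_{n,k_1}\prod_{i\in S}f_{n,k_i}\big)-\mu(f_{n,k_1})\,\mu\big(\prod_{i\in S}f_{n,k_i}\big)$; this last difference vanishes for $S=\emptyset$ and equals $c^{|S|+1,1}_n\big(k_1,(k_i)_{i\in S}\big)$ otherwise, with arguments already in increasing order since $k_1$ is the smallest, hence is at most $C\,\Phi\big(\min_{i\in S}k_i-k_1\big)\le C\,\Phi(k_2-k_1)$ by (B3) and monotonicity of $\Phi$. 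Summing the finitely many terms gives the bound $C'\Phi(k_2-k_1)$; the mirror-image expansion, peeling off the largest index $k_4$ and invoking $c^{\ell,\ell-1}_n$, gives $C'\Phi(k_4-k_3)$, and taking the smaller of the two yields the lemma. This is precisely why (B3) is required only for $2\le\ell\le4$ and $j\in\{1,\ell-1\}$ --- no ``middle'' correlation such as $c^{4,2}_n$ is needed.

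Granting the lemma, I would substitute it, use $\min(\Phi(a),\Phi(b))=\Phi(\max(a,b))$ together with the symmetry of the integrand to pass to the ordered simplex, and discretise via $k_i=\round{u_i}$ (each resulting cell of volume at most $1$, and $k_4\le\round{nt}\le n$ for $t\in[0,1]$), thereby bounding $\mu(\bar\zeta_n(\slot,t)^4)$ by a constant times $n^{-4}$ times
\beqn
\sum_{0\le k_1\le k_2\le k_3\le k_4\le n}\Phi\big(\max(k_2-k_1,\,k_4-k_3)\big).
\eeqn
Parametrising by $k_1$ and the gaps $g_1=k_2-k_1$, $g_2=k_3-k_2$, $g_3=k_4-k_3$, the sums over $k_1$ and $g_2$ cost a factor $O(n^2)$, while $\sum_{g_1,g_3=0}^n\Phi(\max(g_1,g_3))=\sum_{m=0}^n(2m+1)\Phi(m)=O\big(n(\log n)^{-2}\big)$, because $(2m+1)\Phi(m)\le3(\log m)^{-2}$ for $m\ge2$ and $\sum_{m\le n}(\log m)^{-2}=O\big(n(\log n)^{-2}\big)$. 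Hence the displayed sum is $O\big(n^3(\log n)^{-2}\big)$ and $\mu(\bar\zeta_n(\slot,t)^4)=O\big(n^{-1}(\log n)^{-2}\big)$ uniformly in $t\in[0,1]$, which is summable in $n$; thus (A2) holds with $p=4$ and $B=[0,1]$. Theorem~\ref{thm:main1} now applies; its Lipschitz limit agrees with $\zeta$ on the dense set $[0,1]$, hence equals $\zeta$, and the resulting uniform convergence is precisely~\eqref{eq:uniform_conv}.

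I expect the fourth-moment correlation lemma to be the main obstacle: one must transfer the decay assumed for the iterates $f_{n,k}$ to the decay of the corresponding centred four-point correlations using only the two endpoint splittings $j\in\{1,\ell-1\}$ permitted by (B3), and then close the estimate against the borderline-summable weight $\Phi$, where the logarithmic factors have to be tracked with care; the two applications of dominated convergence and the combinatorial counting are routine by comparison.
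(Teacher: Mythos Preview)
Your proposal is correct and follows essentially the same route as the paper: reduce to Theorem~\ref{thm:main1} by obtaining (A1) from (B1)--(B2) via dominated convergence, and (A2) with $p=4$ from (B3) via a bound $|\mu(\bar f_{n,k_1}\cdots\bar f_{n,k_4})|\le C'\min\{\Phi(k_2-k_1),\Phi(k_4-k_3)\}$ derived by peeling off the extreme index and applying (B3) with $j\in\{1,\ell-1\}$.

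The only noteworthy difference is in how the resulting fourth-moment integral is estimated. The paper uses $\min(a,b)\le\sqrt{ab}$ to factor the four-fold integral into a square and then bounds $\int_0^n\Phi(s)^{1/2}\,\rd s=O(n^{1/2}/\log n)$; you instead discretise, parametrise by gaps, and sum $\sum_{g_1,g_3}\Phi(\max(g_1,g_3))=\sum_m(2m+1)\Phi(m)=O(n(\log n)^{-2})$ directly. Both arrive at the same $O(n^{-1}(\log n)^{-2})$ rate, so this is a cosmetic variation rather than a different argument.
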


\medskip
\begin{remark}
If $f$ is vector valued, $f:X\to\bR^d$, similar results hold: it suffices to check the conditions of Theorems~\ref{thm:main1} and~\ref{thm:main2} for each vector component separately.
\end{remark}

\medskip
Conditions (B1)--(B3) were discussed in the paragraphs preceding Theorem~\ref{thm:main2}. Let us mention, however, that the proof of the theorem amounts to showing that (B1)--(B2) imply (A1) with $g=\zeta$ and~(B3) implies (A2) with $p=4$; see Section~\ref{sec:proofs} for details.


\medskip
\subsection{Uniqueness of a physical family of measures}
Consider a topological space~$X$ which is separable and metrizable. Then~$X$ is second-countable, i.e., its topology has a countable base, and the Borel sigma-algebra on~$X$ is countably generated.

\medskip
The following uniqueness theorem shows that, on such spaces, physical families of measures are characterized by their generic points (Definition~\ref{defn:physical_family}).

\medskip
\begin{thm}\label{thm:unique}
Suppose~$X$ is a separable and metrizable space equipped with the Borel sigma-algebra. Let $\bfT$ be a triangular array as in~\eqref{eq:array} and let the reference measure~$\fm$ be given. Let~$\mathscr{P}$ and~$\mathscr{P}'$ be two physical families of measures. If there exists a point $x\in X$ that is both~$\mathscr{P}$-generic and~$\mathscr{P}'$-generic, then $\mathscr{P} = \mathscr{P}'$. 
\end{thm}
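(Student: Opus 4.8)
The plan is to use the two genericity hypotheses to identify $\hat\mu_t$ and $\hat\mu'_t$ tested against one fixed continuous function for Lebesgue-almost every $t$, and then to eliminate the dependence of the exceptional null set on the test function by passing to a countable, measure-determining family of test functions; the topology of $X$ enters only in this last step.

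First I would fix a bounded continuous $f\colon X\to\bR$ and let $x$ be a point that is simultaneously $\mathscr{P}$-generic and $\mathscr{P}'$-generic. Applying the defining relation \eqref{eq:physical_QDS} to both families at this common $x$, the left-hand sides are identical (they are the same limit, independent of which family we test against), so
\[
\int_0^t\hat\mu_s(f)\,\rd s=\int_0^t\hat\mu'_s(f)\,\rd s\qquad\text{for all }t\in[0,1].
\]
Since the $\hat\mu_s$ and $\hat\mu'_s$ are probability measures and $f$ is bounded, the maps $s\mapsto\hat\mu_s(f)$ and $s\mapsto\hat\mu'_s(f)$ are bounded, and they are measurable by the hypothesis in Definition~\ref{defn:physical_family}; hence their difference $\psi_f$ is a bounded measurable function on $[0,1]$ whose indefinite integral vanishes identically, which forces $\psi_f=0$ Lebesgue-almost everywhere. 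Thus there is a Lebesgue-null set $N_f\subset[0,1]$ with $\hat\mu_s(f)=\hat\mu'_s(f)$ for every $s\notin N_f$.

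The crux is then to choose a single \emph{countable} family $\mathcal{D}\subset C_b(X)$ that is measure-determining, meaning that any two Borel probability measures on $X$ agreeing on every element of $\mathcal{D}$ are equal; the set $N:=\bigcup_{h\in\mathcal{D}}N_h$ is then still Lebesgue-null. For this I would invoke that a separable metrizable space embeds homeomorphically into the Hilbert cube $[0,1]^{\bN}$, say via $\iota=(\iota_i)_{i\ge1}$ with continuous coordinate functions $\iota_i\colon X\to[0,1]$, and take $\mathcal{D}$ to consist of all finite products $\iota_{i_1}\cdots\iota_{i_m}$ (with repetitions allowed), together with the constant function $1$; this family is countable. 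To verify that $\mathcal{D}$ is measure-determining, push two such measures $\mu,\nu$ forward under $\iota$ to Borel probability measures on the compact metric space $[0,1]^{\bN}$: they then agree on every monomial in the coordinate projections, hence on the unital algebra generated by those projections, hence---by the Stone--Weierstrass theorem together with the finiteness of the measures---on all of $C([0,1]^{\bN})$, so $\iota_*\mu=\iota_*\nu$; and since $\iota$ is a homeomorphism onto its image, every Borel subset of $X$ is the $\iota$-preimage of a Borel subset of $[0,1]^{\bN}$, whence $\mu=\nu$. (Alternatively one could equip $X$ with a totally bounded compatible metric and build $\mathcal{D}$ from a sup-norm-dense sequence of Lipschitz functions on its completion.)

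Finally I would conclude as follows. For every $s\notin N$ the measures $\hat\mu_s$ and $\hat\mu'_s$ agree on all of $\mathcal{D}$ and are therefore equal; since $N$ is Lebesgue-null, $\hat\mu_t=\hat\mu'_t$ for all but a Lebesgue-null set of parameters $t$, which by the convention recorded just after Definition~\ref{defn:physical_family} is exactly the statement $\mathscr{P}=\mathscr{P}'$. I expect the main obstacle to be the construction of the countable measure-determining family $\mathcal{D}$ and the verification of its defining property: this is precisely where separability and metrizability are genuinely needed, for on a bare measurable state space the null sets $N_f$ really do depend on $f$ and cannot be amalgamated into one. The remaining ingredients---that an indefinite integral vanishing identically comes from an almost-everywhere-zero integrand, and that two Borel probability measures on a compact metric space inducing the same functional on continuous functions coincide---are standard.
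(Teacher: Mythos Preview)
Your proof is correct and follows the same overall architecture as the paper's: use the common generic point to identify the indefinite integrals $\int_0^t\hat\mu_s(f)\,\rd s$ and $\int_0^t\hat\mu'_s(f)\,\rd s$, differentiate to obtain $\hat\mu_s(f)=\hat\mu'_s(f)$ off a Lebesgue-null set $N_f$, and then pass to a countable measure-determining family so the union of the exceptional sets stays null. The only substantive difference is in how that countable family is produced. The paper takes a countable base $\{U_i\}$ for the topology (using second countability), approximates each indicator $1_{U_i}$ pointwise by bounded continuous functions (using metrizability), and concludes via the $\pi$--$\lambda$ theorem that agreement on the base forces equality of the measures. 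You instead embed $X$ into the Hilbert cube and take the countable algebra of coordinate monomials, invoking Stone--Weierstrass on the compact target. Both devices are standard; the paper's is slightly more direct, while yours packages the topological input into a single embedding step and avoids the pointwise-approximation argument.
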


\medskip
We prove Theorem~\ref{thm:unique} in Section~\ref{sec:unique_proof}.

\medskip
We also get the following corollary:

\medskip
\begin{cor}\label{cor:unique}
Let $X$, $\bfT$ and $\fm$ be as above. If $\mathscr{P}$ is a physical family of measures such that almost every point $x\in X$ with respect to~$\fm$ is $\mathscr{P}$-generic, then $\mathscr{P}$ is unique.
\end{cor}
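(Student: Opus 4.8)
The plan is to deduce the corollary directly from Theorem~\ref{thm:unique} by a short measure-theoretic pigeonhole argument. Let $\mathscr{P}'$ be an arbitrary physical family of measures for $\bfT$; we must show $\mathscr{P} = \mathscr{P}'$ in the sense of the identification of families that agree for all but a zero-measure set of parameters $t$. By Definition~\ref{defn:physical_family} applied to $\mathscr{P}'$, there is a measurable set $A' \subset X$ with $\fm(A') > 0$ every point of which is $\mathscr{P}'$-generic. On the other hand, the hypothesis on $\mathscr{P}$ says that the set $N$ of points of $X$ that fail to be $\mathscr{P}$-generic is contained in a set of $\fm$-measure zero.

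Next I would observe that $A' \setminus N$ has $\fm$-outer measure at least $\fm(A') > 0$, hence is nonempty. Fix any $x \in A' \setminus N$. Then $x$ is $\mathscr{P}'$-generic (because $x \in A'$) and simultaneously $\mathscr{P}$-generic (because $x \notin N$). Theorem~\ref{thm:unique} then gives $\mathscr{P} = \mathscr{P}'$. Since $\mathscr{P}'$ was an arbitrary physical family of measures for $\bfT$, this shows $\mathscr{P}$ is unique.

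I do not expect any genuine obstacle here; the only step that carries any content is the appeal to Theorem~\ref{thm:unique}, whose proof in turn uses the second-countability of $X$ to reduce to a countable family of test functions — but that hypothesis is already in force in the present corollary. If one preferred a self-contained argument, one could instead reason directly with the point $x \in A' \setminus N$: for every bounded continuous $f$, both $\int_0^t \hat\mu_s(f)\,\rd s$ and $\int_0^t \hat\mu'_s(f)\,\rd s$ equal $\lim_{n\to\infty}\int_0^t f_{n,\round{ns}}(x)\,\rd s$ for every $t \in [0,1]$, so these two primitives coincide; differentiating in $t$ gives $\hat\mu_t(f) = \hat\mu'_t(f)$ for a.e.\ $t$ (with the null set depending on $f$), and letting $f$ range over a countable convergence-determining family of bounded continuous functions — available by second-countability — forces $\hat\mu_t = \hat\mu'_t$ for a.e.\ $t$.
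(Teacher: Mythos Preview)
Your proposal is correct and follows essentially the same route as the paper: take an arbitrary physical family $\mathscr{P}'$, use $\fm(A')>0$ together with the full-measure set of $\mathscr{P}$-generic points to produce a simultaneously generic point, and invoke Theorem~\ref{thm:unique}. Your added care with outer measure and the optional self-contained variant at the end are harmless elaborations on the same argument.
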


\begin{proof}
Let~$\mathscr{P}'$ be a physical family of measures. Denote by~$A$ and~$A'$ the sets of $\mathscr{P}$-generic and $\mathscr{P}'$-generic points, respectively. Then $\fm(A) = 1$ and $\fm(A')>0$ by assumption. In particular, $A\cap A' \ne \varnothing$, so Theorem~\ref{thm:unique} implies that $\mathscr{P} = \mathscr{P}'$.
\end{proof}


\medskip
\section{Applications}\label{sec:applications}
In this section we put Theorem~\ref{thm:main2} to use and show that the ergodic property~\eqref{eq:uniform_conv} holds for concrete systems. 

\medskip
\subsection{Quasistatic expanding system}\label{sec:expanding}
In this section~$\bS$ will stand for the circle, obtained by identifying the endpoints of the unit interval, and~$\fm$ will stand for the Lebesgue measure on it. We denote by~$d$ the standard metric on~$\bS$.

Let us fix $\lambda>1$ and $A_*>0$, and 
let~$\cM$ denote the set of~$C^2$ maps~$T:\bS\to\bS$ satisfying
\beqn
\inf T' \ge \lambda
\quad\text{and}\quad
\| T''\|_\infty \le A_* .
\eeqn
We equip $\cM$ with the metric $d_{C^1}$ defined by 
\beqn
d_{C^1}(T_1,T_2) = \sup_{x\in\bS} d(T_1 x,T_2 x) + \|T_1'-T_2'\|_\infty.
\eeqn

Next, let $\gamma:[0,1]\to \cM$ be a piecewise H\"older continuous curve with exponent $\eta\in(0,1)$ and
$\bfT$ a triangular array
\beqn
\bfT= \{T_{n,k}\in\cM\ :\ 0\le k\le n, \ n\ge 1\}
\eeqn
such that, for some $C>0$,
\beqn \label{eq:rate}
\sup_{0\le t\le 1} d_{C^1}(T_{n,\round{nt}},\gamma_t) \le Cn^{-\eta}.
\eeqn
Then $(\bfT, \gamma)$ is a QDS with state space~$X=\bS$. 
The rate $n^{-\eta}$ is related to the regularity of the curve $\gamma$ in a natural way: it is the optimal rate in the canonical case $T_{n,k} = \gamma_{kn^{-1}}$.

We remind the reader that, for every $T\in\cM$, there exists a unique invariant probability measure~$\hat\mu_T$ which is equivalent to~$\fm$. Below, we will denote
\beqn
\hat\mu_{t} = \hat\mu_{\gamma_t}.
\eeqn

\medskip
The proof of the following lemma can be found in~\cite{DobbsStenlund_2014}:

\medskip
\begin{lem}\label{lem:QDS_expanding}
Let~$f$ be Lipschitz continuous and~$\mu = \fm$. The map $t\mapsto\hat\mu_t(f)$ is piecewise continuous. For any $\eta'\in(0,\eta)$ there exists $C>0$ such that
\beqn
\sup_{t\in[0,1]}|\mu(f_{n,\round{nt}})-\hat\mu_t(f)| \le C n^{-\eta'}.
\eeqn
There exist $D>0$ and $\vartheta\in(0,1)$ such that the correlation functions defined in~\eqref{eq:corr} satisfy
\beqn
|c^{\ell,j}_n(k_1,\dots,k_\ell)| \le D\vartheta^{k_{j+1}-k_j}
\eeqn
for all integers $2\le \ell \le 4$, $1\le j<\ell$ and $0\le k_1\le\dots\le k_\ell$. 
\end{lem}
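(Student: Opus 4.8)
The plan is to verify that the three hypotheses of Lemma~\ref{lem:QDS_expanding} are precisely what is needed to invoke Theorem~\ref{thm:main2} — indeed, the lemma is stated exactly so that its conclusions deliver (B1), (B2), and (B3). First I would record the standard fact about expanding maps that, for each $T\in\cM$, the transfer operator $\cL_T$ acting on the space of Lipschitz densities has a spectral gap, uniformly in $T\in\cM$ (this follows from the uniform bounds $\inf T'\ge\lambda$ and $\|T''\|_\infty\le A_*$ via a Lasota--Yorke inequality); this gives uniform exponential memory loss, i.e.\ a bound of the form $|(\gamma_t)_*^m\nu(f)-\hat\mu_t(f)|\le C\vartheta^m\|f\|_{\mathrm{Lip}}$ for densities $\nu$ in a bounded class. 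The bulk of the work — and the part already done in~\cite{DobbsStenlund_2014} — is to propagate this along the triangular array: since $d_{C^1}(T_{n,\round{nt}},\gamma_t)\le Cn^{-\eta}$ and $\gamma$ is piecewise $\eta$-H\"older, composing $\round{n\ve}$ maps near $\gamma_t$ differs from $(\gamma_t)_*^{\round{n\ve}}$ by an error that is summable against the exponential contraction, which yields the rate $Cn^{-\eta'}$ for any $\eta'<\eta$ in the second assertion, and the piecewise continuity of $t\mapsto\hat\mu_t(f)$ comes from continuity of $T\mapsto\hat\mu_T$ on $\cM$ together with piecewise continuity of $\gamma$.

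The correlation estimate is the genuinely technical input. For a single map $T$ one has $\mu(g\cdot h\circ T^m)-\mu(g)\mu(h\circ T^m)$ controlled by $\vartheta^m$ when $\mu$ is absolutely continuous with Lipschitz density and $g,h$ are Lipschitz; for the array one must track the composition $f_{n,k_1}\cdots f_{n,k_\ell}$, group the first $j$ factors and the last $\ell-j$ factors, and exploit that between microscopic times $k_j$ and $k_{j+1}$ the dynamics applies $k_{j+1}-k_j$ nearly-expanding maps, each close to some $\gamma_t$, so the "left" observable $f_{n,k_1}\cdots f_{n,k_j}$ (a bounded Lipschitz function of the partial orbit) decorrelates from the "right" one at exponential rate in $k_{j+1}-k_j$, uniformly in $n$. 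Because $2\le\ell\le4$, only finitely many products of Lipschitz functions occur, so their Lipschitz norms stay bounded by a constant depending on $\|f\|_{\mathrm{Lip}}$, $\lambda$, $A_*$; this gives the claimed bound $D\vartheta^{k_{j+1}-k_j}$.

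Finally I would check the passage to Theorem~\ref{thm:main2}: (B1) is the piecewise continuity — hence measurability — of $t\mapsto\hat\mu_t(f)$; (B2) follows from the uniform bound $\sup_t|\mu(f_{n,\round{nt}})-\hat\mu_t(f)|\le Cn^{-\eta'}\to0$, which gives convergence for \emph{every} $t$, a fortiori for a.e.\ $t$; and (B3) follows from $D\vartheta^{k_{j+1}-k_j}\le C\Phi(k_{j+1}-k_j)$, which holds with $C=D\sup_{s\ge0}\vartheta^s/\Phi(s)<\infty$ since $\Phi$ decays only polynomially while $\vartheta^s$ decays exponentially, so the supremum is finite. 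Thus all hypotheses of Theorem~\ref{thm:main2} are met with $\mu=\fm$, and~\eqref{eq:uniform_conv} holds for $\fm$-a.e.\ $x$.

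I expect the main obstacle to be the correlation bound (B3), i.e.\ establishing uniform exponential decorrelation for products of up to four observables along arbitrary levels of the triangular array. The difficulty is that the intermediate measure $(T_{n,k_j})_*\cdots(T_{n,1})_*\fm$ must be shown to stay in a fixed bounded class of Lipschitz densities uniformly in $n$ and $k_j$ — this requires the uniform Lasota--Yorke estimate and careful bookkeeping of how the H\"older-in-$t$ variation of $\gamma$ interacts with composition — and then the exponential contraction between times $k_j$ and $k_{j+1}$ must dominate the accumulated perturbation errors. Since this is carried out in detail in~\cite{DobbsStenlund_2014}, the proof of the present lemma is essentially a citation combined with the observation that $2\le\ell\le4$ keeps all Lipschitz norms under control.
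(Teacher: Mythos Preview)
The paper does not prove this lemma at all: immediately before the statement it says ``The proof of the following lemma can be found in~\cite{DobbsStenlund_2014}.'' So the paper's ``proof'' is a bare citation. Your sketch of the underlying mechanism (uniform Lasota--Yorke/spectral gap for $\cL_T$ over $\cM$, propagation of a bounded class of Lipschitz densities along the array, perturbation error balanced against exponential contraction) is a reasonable outline of what that reference contains, and you correctly identify that the restriction $2\le\ell\le4$ keeps the Lipschitz norms of the product observables uniformly bounded. In that sense your proposal is consistent with the paper, only more detailed.

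However, you have conflated the lemma with Theorem~\ref{thm:QDS_expanding}. The lemma's content is exactly the three analytic assertions (piecewise continuity of $t\mapsto\hat\mu_t(f)$, the rate $Cn^{-\eta'}$, and the exponential correlation bound); it says nothing about (B1)--(B3) or about~\eqref{eq:uniform_conv}. Your opening sentence (``verify that the three hypotheses of Lemma~\ref{lem:QDS_expanding} are precisely what is needed to invoke Theorem~\ref{thm:main2}''), your entire third paragraph, and your final conclusion ``all hypotheses of Theorem~\ref{thm:main2} are met \dots and~\eqref{eq:uniform_conv} holds'' belong to the proof of Theorem~\ref{thm:QDS_expanding}, not to the proof of the lemma. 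If you submitted this as a proof of the lemma, a referee would say you have proved the wrong statement. Strip out the (B1)--(B3) discussion and the invocation of Theorem~\ref{thm:main2}; what remains --- the transfer-operator sketch together with the citation of~\cite{DobbsStenlund_2014} --- is the proof of the lemma, and matches the paper's approach (which is simply to cite).
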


\medskip
\begin{remark}\label{rem:Lip}
The Lipschitz continuity assumption in the lemma can be relaxed, but this is of little interest for our present purposes; see the result below.
\end{remark}

\medskip
We are now in position to prove the following ergodic theorem for the quasistatic expanding system:

\medskip
\begin{thm}\label{thm:QDS_expanding}
Let $f$ be continuous. Then
\beq\label{eq:uniform_conv_expanding}
\lim_{n\to\infty}\sup_{t\in[0,1]}|\zeta_n(x,t)-\zeta(t)| = 0
\eeq
for almost every $x$ in the sense of Lebesgue. In particular, $(\hat\mu_t)_{t\in[0,1]}$ is the unique physical family of measures for the quasistatic dynamical system in question.
\end{thm}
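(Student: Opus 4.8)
The plan is to reduce everything to the abstract machinery already in place: I would first prove \eqref{eq:uniform_conv_expanding} for Lipschitz continuous $f$ by verifying the hypotheses of Theorem~\ref{thm:main2}, then bootstrap to arbitrary continuous $f$ by uniform approximation, and finally read off the uniqueness statement from Corollary~\ref{cor:unique}.

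\emph{Lipschitz case.} Take $\mu=\fm$, which is a probability measure since $\bS$ has unit length. I would check (B1)--(B3) of Theorem~\ref{thm:main2} using Lemma~\ref{lem:QDS_expanding}. Condition (B1) is immediate: the function $t\mapsto\hat\mu_t(f)$ is piecewise continuous, hence measurable. Condition (B2) follows at once from the bound $\sup_t|\mu(f_{n,\round{nt}})-\hat\mu_t(f)|\le Cn^{-\eta'}$, which gives convergence for every $t\in[0,1]$ (in particular for a.e.\ $t$). For (B3), I would observe that $\Phi$ is bounded below by the positive constant $2^{-1}(\log 2)^{-2}$ on $[0,2)$ and decays only polynomially up to a logarithmic factor at infinity, whereas $\vartheta^{m}$ decays exponentially; hence there is a constant $C>0$ with $D\vartheta^{m}\le C\Phi(m)$ for all integers $m\ge0$, so the correlation estimate of Lemma~\ref{lem:QDS_expanding} yields (B3). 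Theorem~\ref{thm:main2} then gives \eqref{eq:uniform_conv_expanding} for Lipschitz $f$.

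\emph{General continuous $f$.} Since $\bS$ is compact, I can pick Lipschitz functions $f^{(j)}$ with $\|f-f^{(j)}\|_\infty\to0$. For each $j$ the Lipschitz case provides a measurable set $X_j\subset\bS$ with $\fm(X_j)=1$ on which $\sup_t|\zeta_n^{(j)}(x,t)-\zeta^{(j)}(t)|\to0$, where the superscript $(j)$ marks the object built from $f^{(j)}$. On the full-measure set $X_\infty=\bigcap_j X_j$ I would estimate, for any $j$,
\begin{align*}
\sup_t|\zeta_n(x,t)-\zeta(t)| &\le \sup_t\Bigl|\int_0^t (f-f^{(j)})_{n,\round{ns}}(x)\,\rd s\Bigr| + \sup_t|\zeta_n^{(j)}(x,t)-\zeta^{(j)}(t)| \\
&\quad{}+ \sup_t\Bigl|\int_0^t\bigl(\hat\mu_s(f)-\hat\mu_s(f^{(j)})\bigr)\,\rd s\Bigr| .
\end{align*}
The first and third terms are at most $\|f-f^{(j)}\|_\infty$ and the middle term tends to $0$ as $n\to\infty$; letting $n\to\infty$ and then $j\to\infty$ proves \eqref{eq:uniform_conv_expanding} for continuous $f$. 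Note that measurability of $t\mapsto\hat\mu_t(f)$ for continuous $f$ comes for free here, being a uniform limit of the measurable functions $t\mapsto\hat\mu_t(f^{(j)})$, so $\zeta$ is a well-defined continuous function.

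\emph{Uniqueness.} Since \eqref{eq:uniform_conv_expanding} now holds for every bounded continuous $f$ and for $\fm$-a.e.\ $x$, and uniform convergence in $t$ implies the pointwise statement \eqref{eq:physical_QDS}, the family $(\hat\mu_t)_{t\in[0,1]}$ is a physical family of measures whose set of generic points has full $\fm$-measure. As $\bS$ is separable and metrizable, Corollary~\ref{cor:unique} delivers uniqueness. I do not anticipate a genuine obstacle: the substantive analysis is already absorbed into Theorem~\ref{thm:main2} and Lemma~\ref{lem:QDS_expanding}, and the only points requiring a little care are the countable combination of null sets and the uniform-in-$t$ control of the three error terms in the approximation step.
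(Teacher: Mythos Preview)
Your proof is correct and follows essentially the same route as the paper: verify (B1)--(B3) for Lipschitz $f$ via Lemma~\ref{lem:QDS_expanding} and Theorem~\ref{thm:main2}, then approximate a continuous $f$ uniformly by Lipschitz functions, intersect the countably many full-measure sets, and finally invoke Corollary~\ref{cor:unique}. Your three-term decomposition in the approximation step is in fact slightly more explicit than the paper's (which suppresses the $|\zeta^{(j)}(t)-\zeta(t)|$ contribution), but the argument is otherwise identical.
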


\medskip
\begin{proof}
Let $f$ be Lipschitz continuous and $\mu=\fm$. Lemma~\ref{lem:QDS_expanding} shows that the conditions of Theorem~\ref{thm:main2} are satisfied. Hence,~\eqref{eq:uniform_conv_expanding} holds true for Lipschitz continuous functions. We extend it to all continuous functions. To that end, we argue by approximation. We write $\zeta_n(f;x,t)$ for $\zeta_n(x,t)$ when it is necessary to emphasize which function~$f$ is used in the definition of $\zeta_n$.  Let $f$ be a continuous function.
Given $\ve>0$, there exists a Lipschitz continuous function $g$ such that
\beqn
\|f-g\|_\infty < \ve,
\eeqn
because Lipschitz continuous functions are dense in the space of continuous functions equipped with the uniform norm.
Then
\beqn
\sup_{t\in[0,1]} |\zeta_n(f;x,t) - \zeta_n(g;x,t)| \le \int_0^1 |f_{n,\round{ns}}(x) - g_{n,\round{ns}}(x)|\,\rd s < \ve.
\eeqn
for all~$x$ and~$n$. On the other hand, what was already proven above implies that there exists $\Omega_\ve$ with $\fm(\Omega_\ve)=1$ such that
\beqn
\lim_{n\to\infty}\sup_{t\in[0,1]}|\zeta_n(g;x,t)-\zeta(t)| = 0
\eeqn
for all~$x\in\Omega_\ve$, so
\beqn
\limsup_{n\to\infty}\sup_{t\in[0,1]}|\zeta_n(f;x,t)-\zeta(t)| < \ve, \quad x\in\Omega_\ve.
\eeqn
Now, set $\Omega_0 = \cap_{k\ge 1}\Omega_{1/k}$. Then $\fm(\Omega_0) = 1$ and
\beqn
\lim_{n\to\infty}\sup_{t\in[0,1]}|\zeta_n(f;x,t)-\zeta(t)| = 0, \quad x\in\Omega_0.
\eeqn
This is what was to be shown. Obviously $(\hat\mu_t)_{t\in[0,1]}$ is then a physical family of measures. Its uniqueness is a consequence of Corollary~\ref{cor:unique} since $\fm(\Omega_0) = 1$.
\end{proof}


\medskip
\subsection{Quasistatic billiards}
Dispersing billiards on a 2-dimensional torus is an important model in which a point particle moves linearly on the surface of the torus among \emph{fixed}, strictly convex, scatterers with smooth boundaries. Upon collision with a scatterer, the particle bounces off elastically and continues its linear motion. If the scatterers are placed in such a way that the length of the free path between any two successive collisions is bounded, recording collision points gives a rather accurate representation of the position of the particle as a function of time. In other words, given the scatterer configuration~$K$, the dynamics of the particle is represented by a billiard map $F_{K}$ mapping one collision to the next. A standard textbook on billiards is~\cite{ChernovMarkarian_2006}.

Following~\cite{StenlundYoungZhang_2013}, we now discuss dispersing billiards with \emph{moving} scatterers.
To that end, let $\cK$ be the collection of admissible scatterer configurations. By admissibility we mean that the strictly convex scatterers have sufficiently smooth boundaries with uniform curvature bounds, and that the free path length is uniformly bounded; see~\cite{StenlundYoungZhang_2013} for precise conditions. There is a natural distance on~$\cK$, which we denote by~$d$. There exist~$\ve_0>0$ and~$X$ (called the ``collision space'') such that, given any pair $(K,K')\in \cK\times\cK$ satisfying~$d(K,K')<\ve_0$, one can define a map $F_{K',K}:X\to X$ mapping a collision with a scatterer~$S$ in the source configuration~$K$ to the next collision with a scatterer~$S'$ in the target configuration~$K'$. The physical interpretation is that, after the particle collided with~$S$, the scatterers had a short time to move by a small amount ($\ve_0$ is \emph{very small}) to a new configuration~$K'$, before the particle finally met~$S'$. Composing such maps one can model the motion of the particle amongst slowly moving scatterers (the speed being bounded by $\ve_0$).


In the limit of infinitesimally slowly moving scatterers, we obtain a quasistatic dynamical system: We fix a (piecewise) continuous curve 
\beqn
\gamma:[0,1]\to\cK
\eeqn
and a triangular array of configurations
\beqn
\bfT = \{K_{n,k}\in\cK\ :\ 0\le k\le n, \ n\ge 1\} 
\eeqn
satisfying 
\beqn
\lim_{n\to\infty}K_{n,\round{nt}} = \gamma_t
\eeqn
uniformly.
We identify the system space~$\cM$ with the metric space~$\cK$ of admissible configurations and write $T_{n,k} = F_{K_{n,k}}$ for~$0\le k\le n$ and~$n\ge 1$. 

In the present quasistatic setting, defining $T_{n,k}$ to be $F_{K_{n,k}}$ instead of $F_{K_{n,k},K_{n,k-1}}$ is a justified simplification: as \mbox{$n\to\infty$}, $d(K_{n,k-1},K_{n,k})$ tends to zero uniformly and the maps $F_{K_{n,k},K_{n,k-1}}$ and $F_{K_{n,k}}$ become indistinguishable.

It is well known that all the billiard maps $T_K\in\cM$ ($K\in\cK$) preserve the same measure, which we denote by~$\hat\mu$. (This is a direct consequence of the fact that the underlying flow is Hamiltonian.) In particular, 
\beqn
\hat\mu_t = \hat\mu
\eeqn
and
\beqn
\zeta(t) = \int_0^t \hat\mu_s(f) \,\rd s = t\,\hat\mu(f)
\eeqn 
for all $t\in[0,1]$ and all $f$.

\medskip
\begin{lem}\label{lem:QDS_billiards}
Let $f$ be Lipschitz continuous and $\mu = \hat\mu$. There exist $C>0$ and $\vartheta\in(0,1)$ such that the correlation functions defined in~\eqref{eq:corr} satisfy
\beqn
|c^{\ell,j}_n(k_1,\dots,k_\ell)| \le C\vartheta^{k_{j+1}-k_j}
\eeqn
for all integers $2\le \ell \le 4$, $1\le j<\ell$ and $0\le k_1\le\dots\le k_\ell$. 
\end{lem}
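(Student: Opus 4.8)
The plan is to reduce the multiple-correlation bounds to known decay-of-correlations estimates for the quasistatic billiard from \cite{StenlundYoungZhang_2013}. Recall that for any fixed admissible configuration $K$, the billiard map $T_K$ is a uniformly hyperbolic map preserving $\hat\mu$, and more importantly, the results of \cite{StenlundYoungZhang_2013} provide \emph{exponential} memory loss for compositions $T_{K_m}\circ\dots\circ T_{K_1}$ along any admissible sequence of slowly-varying configurations, acting on a suitable class of regular (e.g.\ Hölder or bounded-variation-along-unstable-curves) densities with respect to $\hat\mu$. Since $\mu = \hat\mu$ is precisely the invariant reference measure and $f$ is Lipschitz, all the relevant densities that appear below will lie in that admissible class with uniformly controlled norms, which is the mechanism that makes $D$ and $\vartheta$ independent of $n$ and of the $k_i$.

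First I would set up notation: write $\Pi_{a}^{b} = (T_{n,b})_*\cdots(T_{n,a+1})_*$ for the pushforward of the cocycle between microscopic times $a$ and $b$, so that $\mu(f_{n,k_1}\cdots f_{n,k_\ell})$ is an integral against $\hat\mu$ of a product of the form (regular density)$\times(f\circ T_{n,k_\ell}\circ\cdots)$. The key step is the standard ``coupling/gap'' argument: in $c^{\ell,j}_n(k_1,\dots,k_\ell)$ the two blocks are separated by the gap $k_{j+1}-k_j$ microscopic steps. One inserts this gap, conditions on the value of the first block $f_{n,k_1}\cdots f_{n,k_j}$ (equivalently, one writes the expectation of the full product as an integral over $X$ of $(f_{n,k_1}\cdots f_{n,k_j})$ weighted against a transfer-operator image of the second block, and compares with the product of the two separate expectations), and then applies exponential memory loss over the $k_{j+1}-k_j$ steps of the gap. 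Because $f$ is bounded, the ``first block'' weight $f_{n,k_1}\cdots f_{n,k_j}$ and its $\hat\mu$-average differ by something whose pushforward under $\Pi_{k_j}^{k_{j+1}}$ contracts at rate $\vartheta^{k_{j+1}-k_j}$ toward a multiple of $\hat\mu$; integrating the remaining factor $f_{n,k_{j+1}}\cdots f_{n,k_\ell}$ against this contracted signed measure and using $\|f\|_\infty<\infty$ gives $|c^{\ell,j}_n(k_1,\dots,k_\ell)| \le C\vartheta^{k_{j+1}-k_j}$, with $C$ depending only on $\|f\|_\infty$, the Lipschitz constant of $f$, the uniform hyperbolicity/distortion constants of the family $\cM$, and the memory-loss constants of \cite{StenlundYoungZhang_2013}. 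Only $2\le\ell\le4$ and $j\in\{1,\ell-1\}$ are needed, so only finitely many combinatorial cases arise and no uniformity in $\ell$ is required.

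The main obstacle is the technical matching of function classes: one must verify that the weighted densities arising at the gap — namely images under the (possibly inhomogeneous) transfer cocycle of products like $(f\circ\cdots)\,\rd\hat\mu$ — actually belong to the admissible class for which \cite{StenlундYoungZhang_2013} proves exponential memory loss, and that their admissible-norm is bounded \emph{uniformly} in $n$ and in all the times $k_i$. For billiards this is delicate because of the singularities of the billiard map and the resulting unbounded distortion near the singularity set; the resolution is exactly what is carried out in \cite{StenlundYoungZhang_2013}, where the relevant Banach space is built from standard pairs / unstable curves and where finitely many steps of a bounded and Lipschitz observable keep one inside that space with controlled norm. Granting that machinery, the argument is the routine gap-insertion estimate sketched above; I would cite \cite{StenlundYoungZhang_2013} for the memory-loss input and \cite{ChernovMarkarian_2006} for the standard hyperbolicity and distortion facts, and note that the homogeneity $\hat\mu_t\equiv\hat\mu$ makes the centering $\mu=\hat\mu$ perfectly adapted so that no additional error from a drifting reference measure enters.
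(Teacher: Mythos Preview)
Your proposal is correct and follows essentially the same route as the paper: the paper's proof simply observes that the slowly-varying condition puts the system within the scope of \cite{StenlundYoungZhang_2013}, and that because every $T_{n,k}$ preserves the common measure $\hat\mu$, the multiple-correlation bound follows exactly as in \cite{Stenlund_2012}. Your sketch of the gap-insertion argument using the memory-loss input from \cite{StenlundYoungZhang_2013}, together with your observation that $\mu=\hat\mu$ being invariant removes any drift error, is precisely what those references supply; you have just unpacked what the paper leaves to the citations.
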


\medskip
\begin{proof}
Since~$\ve>0$ is small and $d(K_{n,k-1},K_{n,k})<\ve$ for all $k$ and $n$, the theory of~\cite{StenlundYoungZhang_2013} for billiards with slowly moving scatterers applies. Since all of the maps $T_{n,k}$ preserve the same measure $\hat\mu$, the correlation bound is then obtained exactly as in~\cite{Stenlund_2012}.
\end{proof}

\medskip
Remark~\ref{rem:Lip} on the regularity of~$f$ applies also here.

\medskip
We are now in position to prove the following ergodic theorem for quasistatic billiards:

\medskip
\begin{thm}\label{thm:QDS_billiards}
Let $f$ be continuous. Then
\beq\label{eq:uniform_conv_billiards}
\lim_{n\to\infty}\sup_{t\in[0,1]}|\zeta_n(x,t)- t\,\hat\mu(f)| = 0
\eeq
for almost every $x$ in the sense of Lebesgue. In particular, the constant family $(\hat\mu)_{t\in[0,1]}$ is the unique physical family of measures for the quasistatic dynamical system in question.
\end{thm}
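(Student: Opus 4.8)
\textit{Proof proposal.} The proof will follow the same two-step pattern as that of Theorem~\ref{thm:QDS_expanding}, and is in fact simpler, because here the associated measures are constant in~$t$ and invariant under every~$T_{n,k}$. First I would establish~\eqref{eq:uniform_conv_billiards} for Lipschitz continuous~$f$ by checking the hypotheses of Theorem~\ref{thm:main2} with $\mu=\hat\mu$; then extend to arbitrary continuous~$f$ by approximation; and finally deduce uniqueness of the physical family from Corollary~\ref{cor:unique}.

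For the first step, fix a Lipschitz continuous~$f$ and take $\mu=\hat\mu$. Condition~(B1) holds trivially, since $t\mapsto\hat\mu_t(f)=\hat\mu(f)$ is constant, hence measurable. Condition~(B2) holds with exact equality: because $\hat\mu$ is invariant under each billiard map~$T_{n,k}$, one has
\[
\mu(f_{n,\round{nt}})=\int_X f\circ T_{n,\round{nt}}\circ\dots\circ T_{n,1}\,\rd\hat\mu=\hat\mu(f)=\hat\mu_t(f)
\]
for every $t\in[0,1]$ and every $n\ge1$. Condition~(B3) follows from Lemma~\ref{lem:QDS_billiards}: the exponential bound $C\vartheta^{\,k_{j+1}-k_j}$ is dominated by $C'\,\Phi(k_{j+1}-k_j)$ for a suitable constant $C'>0$, since $\Phi$ decays only polynomially fast. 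Theorem~\ref{thm:main2} then gives~\eqref{eq:uniform_conv_billiards} for $\hat\mu$-almost every~$x$; and since the invariant billiard measure~$\hat\mu$ is equivalent to the Lebesgue measure on the collision space~$X$, this is exactly Lebesgue-almost every~$x$.

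For the second step I would repeat the approximation argument from the proof of Theorem~\ref{thm:QDS_expanding}: given continuous~$f$ and $\ve>0$, pick a Lipschitz~$g$ with $\|f-g\|_\infty<\ve$, combine the pointwise bound $\sup_{t\in[0,1]}|\zeta_n(f;x,t)-\zeta_n(g;x,t)|\le\|f-g\|_\infty<\ve$ with $\sup_{t\in[0,1]}|t\,\hat\mu(f)-t\,\hat\mu(g)|\le\ve$ and the convergence already proven for~$g$, and then intersect the full-measure sets obtained for $\ve=1/k$, $k\ge1$, to get a set of full Lebesgue measure on which~\eqref{eq:uniform_conv_billiards} holds. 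Finally, $X$ is a compact metric space, hence separable and metrizable, so Corollary~\ref{cor:unique} applies and shows that the constant family $(\hat\mu)_{t\in[0,1]}$ is the unique physical family of measures for the quasistatic billiard.

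I do not expect a serious obstacle here: the analytic content sits entirely in Lemma~\ref{lem:QDS_billiards} (imported from the existing billiard literature) and in Theorem~\ref{thm:main2}. The only points demanding a little care are the elementary domination of the exponential correlation bound by~$\Phi$, the identification of ``$\hat\mu$-a.e.'' with ``Lebesgue-a.e.'' via the equivalence of the two measures on~$X$, and the verification that the collision space meets the topological hypotheses of Corollary~\ref{cor:unique}.
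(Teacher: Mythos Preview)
Your proposal is correct and follows essentially the same route as the paper's own proof: verify (B1)--(B3) of Theorem~\ref{thm:main2} for Lipschitz~$f$ with $\mu=\hat\mu$ using the common invariance of~$\hat\mu$ and Lemma~\ref{lem:QDS_billiards}, pass from $\hat\mu$-a.e.\ to Lebesgue-a.e.\ via equivalence of the two measures, extend to continuous~$f$ by the same approximation argument as in Theorem~\ref{thm:QDS_expanding}, and conclude uniqueness from Corollary~\ref{cor:unique}. If anything, you spell out a couple of points (the domination of $\vartheta^{k_{j+1}-k_j}$ by $\Phi$ and the topological hypotheses on~$X$) a bit more explicitly than the paper does.
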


\medskip
\begin{proof}
Let $f$ be a Lipschitz continuous function first.
Since $\hat\mu$ is a common invariant measure for the maps, we have $\hat\mu(f_{n,\round{nt}})=\hat\mu(f)=\hat\mu_t(f)$, so conditions~(B1) and~(B2) of Theorem~\ref{thm:main2} are satisfied. On the other hand, Lemma~\ref{lem:QDS_billiards} implies that also~(B3) holds, and so,~\eqref{eq:uniform_conv_billiards} follows for Lipschitz continuous functions and almost every~$x$ with respect to the measure~$\hat\mu$. But~$\hat\mu$ is equivalent to the Lebesgue measure~$\fm$, so we can replace the former by the latter. (Precisely, the density of $\hat\mu$ vanishes on the set of those initial conditions that correspond to a tangential collision with a scatterer, which has zero Lebesgue measure). An approximation argument similar to the one carried out in the proof of Theorem~\ref{thm:QDS_expanding} shows that the result extends to continuous functions. Clearly $\mathscr{P} = (\hat\mu)_{t\in[0,1]}$ is a physical family of measures and the preceding result shows that almost every point with respect to~$\fm$ is $\mathscr{P}$-generic. The uniqueness of~$\mathscr{P}$ now follows from Corollary~\ref{cor:unique}.
\end{proof}


\medskip
\section{Preliminary lemmas}\label{sec:preliminary}
In this section we lay the foundations for the proofs of Theorems~\ref{thm:main1} and~\ref{thm:main2} by proving two useful lemmas.

\medskip
\begin{lem}\label{lem:Lip_extension_limit}
Let $(\Omega,\cF,\bP)$ be a probability space and $g_n:\Omega\times[0,1]\to\bR$, $n\ge 1$, given functions. Assume that $g_n(\omega,\slot)$ are uniformly Lipschitz continuous. Assume, moreover, that there exist a dense set $A\subset[0,1]$ and a function~$g:\Omega\times A\to \bR$ such that, given~$t\in A$,
\beqn
\lim_{n\to\infty} g_n(\omega,t) = g(\omega,t)
\eeqn
for almost every~$\omega$. (The associated set of full measure may depend on~$t$.)
Then there exists~$\Omega_0\in\cF$ with~$\bP(\Omega_0)=1$ such that, for every~$\omega\in\Omega_0$, $g(\omega,\slot)$ extends to a Lipschitz continuous function on~$[0,1]$, which we still denote by~$g(\omega,\slot)$. Moreover,
\beqn
\lim_{n\to\infty}\sup_{t\in[0,1]}|g_n(\omega,t)-g(\omega,t)| = 0
\eeqn
for all $\omega\in\Omega_0$.
\end{lem}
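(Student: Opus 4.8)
The plan is to pass from the uncountable dense set $A$ to a \emph{countable} dense set, carry the Lipschitz bound over to the limit, and then upgrade pointwise convergence on that countable set to uniform convergence on all of $[0,1]$ by equicontinuity. Write $L<\infty$ for a common Lipschitz constant, i.e.\ $|g_n(\omega,s)-g_n(\omega,t)|\le L\,|s-t|$ for all $n\ge 1$, all $\omega\in\Omega$ and all $s,t\in[0,1]$; this is what ``uniformly Lipschitz continuous'' means. First I would choose a countable set $D\subset A$ which is still dense in $[0,1]$: this is possible because $[0,1]$ is second-countable, so one may pick one point of $A$ from each nonempty member of a countable base. For each $t\in D$ fix $\Omega_t\in\cF$ with $\bP(\Omega_t)=1$ on which $g_n(\slot,t)\to g(\slot,t)$, and put $\Omega_0=\bigcap_{t\in D}\Omega_t$. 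As a countable intersection of full-measure sets, $\Omega_0\in\cF$ and $\bP(\Omega_0)=1$.

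Next, fix $\omega\in\Omega_0$. For $s,t\in D$ one has $|g(\omega,s)-g(\omega,t)|=\lim_{n\to\infty}|g_n(\omega,s)-g_n(\omega,t)|\le L\,|s-t|$, so $g(\omega,\slot)$ is $L$-Lipschitz on the dense set $D$ and hence extends uniquely to an $L$-Lipschitz function on $[0,1]$, which I keep denoting $g(\omega,\slot)$. It remains to prove the uniform convergence for this $\omega$. Given $\ve>0$, use compactness of $[0,1]$ and density of $D$ to fix a finite set $\{s_1,\dots,s_m\}\subset D$ such that every $t\in[0,1]$ lies within $\ve/(4L)$ of some $s_i$, and then pick $N$ so large that $|g_n(\omega,s_i)-g(\omega,s_i)|<\ve/4$ for all $i\in\{1,\dots,m\}$ and all $n\ge N$. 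For $n\ge N$ and arbitrary $t\in[0,1]$, choosing $s_i$ with $|t-s_i|\le\ve/(4L)$ and using the $L$-Lipschitz bounds for $g_n(\omega,\slot)$ and for the extended $g(\omega,\slot)$, the triangle inequality gives $|g_n(\omega,t)-g(\omega,t)|\le\tfrac{\ve}{4}+\tfrac{\ve}{4}+\tfrac{\ve}{4}<\ve$, whence $\sup_{t\in[0,1]}|g_n(\omega,t)-g(\omega,t)|\le\ve$ for all $n\ge N$. I would also record that, combining this uniform convergence with the hypothesis, for every $t\in A$ (not only $t\in D$) the extended function agrees with the originally given $g(\omega,t)$ for $\bP$-almost every $\omega$, so the extension is consistent with the data.

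The main obstacle is purely one of quantifier order: the null set excluded in the hypothesis depends on $t$, so one cannot intersect the good sets over all of the (possibly uncountable) set $A$. Restricting to a countable dense $D\subset A$ makes the intersection $\Omega_0$ legitimate, and the uniform Lipschitz hypothesis — that is, equicontinuity of the family $\{g_n(\omega,\slot)\}_{n\ge 1}$ — is exactly what lets convergence on $D$ be promoted to convergence on all of $[0,1]$. The remaining ingredients (existence of the countable dense subset, Lipschitz extension from a dense set, and the $\ve/3$-type estimate) are routine.
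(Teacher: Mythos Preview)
Your proof is correct and follows essentially the same strategy as the paper: both pass to a countable (ultimately finite, for each $\ve$) subset of $A$ so that the null sets can be intersected, and then use the uniform Lipschitz bound to promote convergence on that set to uniform convergence on $[0,1]$. The only cosmetic difference is that the paper argues via a uniform Cauchy estimate on $(g_n(\omega,\slot))_n$ and then identifies the limit, whereas you first extend $g(\omega,\slot)$ from $D$ and run a direct $\ve/3$ comparison.
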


\medskip
In the special case when the functions only depend on~$t$, we readily get the following corollary:

\medskip
\begin{cor}\label{cor:Lip_extension_limit}
Let the functions $g_n:[0,1]\to\bR$, $n\ge 1$, be uniformly Lipschitz continuous. Assume, moreover, that there exist a dense set $A\subset[0,1]$ and a function~$g:A\to \bR$ such that
\beqn
\lim_{n\to\infty} g_n(t) = g(t), \quad t\in A. 
\eeqn
Then $g$ extends to a Lipschitz continuous function on~$[0,1]$, which we still denote by~$g$, with the property that
\beqn
\lim_{n\to\infty}\sup_{t\in[0,1]}|g_n(t)-g(t)| = 0.
\eeqn
\end{cor}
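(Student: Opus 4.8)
The plan is to deduce Corollary~\ref{cor:Lip_extension_limit} from Lemma~\ref{lem:Lip_extension_limit} by the standard device of regarding functions that do not depend on $\omega$ as (trivial) random functions on a one-point probability space. Concretely, I would take $\Omega$ to be any singleton $\{*\}$ with its unique probability measure $\bP$, and define $\tilde g_n:\Omega\times[0,1]\to\bR$ by $\tilde g_n(*,t)=g_n(t)$ and $\tilde g:\Omega\times A\to\bR$ by $\tilde g(*,t)=g(t)$. The hypotheses of the corollary translate verbatim into the hypotheses of the lemma: the $g_n$ being uniformly Lipschitz means the $\tilde g_n(*,\slot)$ are uniformly Lipschitz, and the pointwise convergence $g_n(t)\to g(t)$ on the dense set $A$ gives $\tilde g_n(*,t)\to\tilde g(*,t)$ for almost every $\omega$ (indeed for the only $\omega$).

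The lemma then furnishes a set $\Omega_0\subset\Omega$ of full measure on which $\tilde g(*,\slot)$ extends to a Lipschitz function on $[0,1]$ and $\sup_{t\in[0,1]}|\tilde g_n(*,t)-\tilde g(*,t)|\to 0$. Since $\bP(\Omega_0)=1$ forces $\Omega_0=\Omega=\{*\}$, these conclusions hold at the single point $*$, which is exactly the statement that $g$ extends to a Lipschitz function on $[0,1]$ (call it $g$ again) with $\sup_{t\in[0,1]}|g_n(t)-g(t)|\to 0$. That completes the proof.

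I do not expect any real obstacle here; the only thing to be slightly careful about is measurability, but on a one-point space every function is measurable, so nothing needs to be checked. An equally acceptable alternative, if one prefers to avoid invoking the lemma, would be to prove the corollary directly: uniform Lipschitz continuity with a common constant $L$ makes $\{g_n\}$ equicontinuous, pointwise convergence on the dense set $A$ combined with a triangle-inequality/$3\varepsilon$ argument shows $\{g_n\}$ is uniformly Cauchy on $[0,1]$, hence converges uniformly to a limit which must agree with $g$ on $A$ and inherits the Lipschitz bound $L$ by passing to the limit in $|g_n(s)-g_n(t)|\le L|s-t|$. But since Lemma~\ref{lem:Lip_extension_limit} is already available and strictly more general, the one-line reduction above is the cleanest route.
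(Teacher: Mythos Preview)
Your proposal is correct and matches the paper's approach exactly: the paper states that the corollary follows ``readily'' as the special case of Lemma~\ref{lem:Lip_extension_limit} in which the functions do not depend on~$\omega$, which is precisely your one-point probability space reduction.
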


\medskip
\begin{proof}[Proof of Lemma~\ref{lem:Lip_extension_limit}]
Denote the supremum of the Lipschitz constants of~$g_n(\omega,\slot)$ by~$L$.
Fix $\ve>0$ arbitrarily. Fix a finite set $\cT_\ve = (t_k)_{k=0}^K$ such that $t_0 = 0$; $t_K=1$; $0<t_{k+1}-t_k \le \frac14 \ve L^{-1}$ for $0\le k< K$; and $t_k\in A$ for $0<k<K$.
Then
\beqn
\sup_{t\in[0,1]}|g_n(\omega,t)-g_m(\omega,t)| \le \max_{0< k<K} |g_n(\omega,t_k)-g_m(\omega,t_k)| + \frac\ve2.
\eeqn
By the convergence assumption, there exists a set $\Omega_\ve\subset\Omega$ with $\bP(\Omega_\ve)=1$ such that, for every $\omega\in\Omega_\ve$, there exists $N_\ve(\omega)\in\bN$ satisfying
\beqn
\max_{0< k<K} |g_n(\omega,t_k)-g(\omega,t_k)| \le \frac\ve4, \quad n\ge N_\ve(\omega).
\eeqn
In particular, 
\beqn
\sup_{t\in[0,1]}|g_n(\omega,t)-g_m(\omega,t)| \le \ve, \quad n,m\ge N_\ve(\omega).
\eeqn
Now, set $\Omega_0 = \cap_{j=1}^\infty \Omega_{j^{-1}}$. Note that $\bP(\Omega_0) = 1$ and $\Omega_0\subset\Omega_\ve$ for all~$\ve>0$. We now see that $(g_n(\omega,\slot))_{n\ge 0}$ is a Cauchy sequence in $C([0,1])$, for all $\omega\in\Omega_0$. As such, it converges uniformly, necessarily to an extension of~$g(\omega,\slot)$. The latter inherits the bound~$L$ on its Lipschitz constant.
\end{proof}


\medskip
The following lemma in probability theory gives a moment condition for almost sure convergence of a sequence of random variables.

\medskip
\begin{lem}\label{lem:as_conv}
Let $(X_n)_{n\ge 1}$ be a sequence of real-valued random variables.
If there exists a number~$p\ge 1$ such that
\beqn
\sum_{n=1}^\infty \bE(|X_n|^p) < \infty,
\eeqn
then
\beqn
\lim_{n\to\infty} X_n = 0
\eeqn
almost surely.
\end{lem}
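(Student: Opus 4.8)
The statement to prove is Lemma~\ref{lem:as_conv}: if $\sum_n \bE(|X_n|^p) < \infty$ for some $p \ge 1$, then $X_n \to 0$ almost surely.

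\medskip

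The plan is to use the standard Borel--Cantelli argument combined with Markov's inequality. First I would observe that it suffices to prove $\sum_n |X_n|^p < \infty$ almost surely, since the convergence of a series forces its terms to tend to zero, hence $|X_n|^p \to 0$ and therefore $X_n \to 0$ almost surely. To get the almost sure summability of $\sum_n |X_n|^p$, I would invoke the Tonelli/monotone convergence theorem: the nonnegative random variable $Y = \sum_{n=1}^\infty |X_n|^p$ satisfies $\bE(Y) = \sum_{n=1}^\infty \bE(|X_n|^p) < \infty$ by hypothesis, so $Y < \infty$ almost surely. This is the cleanest route and avoids even needing Markov's inequality explicitly.

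\medskip

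An alternative, if one prefers the Borel--Cantelli phrasing: fix $\varepsilon > 0$ and apply Markov's inequality to get $\bP(|X_n| > \varepsilon) = \bP(|X_n|^p > \varepsilon^p) \le \varepsilon^{-p}\bE(|X_n|^p)$. Summing over $n$ gives $\sum_n \bP(|X_n| > \varepsilon) \le \varepsilon^{-p}\sum_n \bE(|X_n|^p) < \infty$, so by the first Borel--Cantelli lemma the event $\{|X_n| > \varepsilon \text{ infinitely often}\}$ has probability zero. Taking $\varepsilon = 1/k$ over $k \ge 1$ and intersecting the resulting full-measure events yields a single set of full measure on which $\limsup_n |X_n| \le 1/k$ for every $k$, i.e. $X_n \to 0$.

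\medskip

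There is no real obstacle here; this is a textbook fact and both arguments are short. The only point to be mildly careful about is the measurability and the interchange of sum and expectation, which is justified by Tonelli's theorem since the summands are nonnegative, and the fact that a countable intersection of full-measure sets still has full measure. I would present the first (Tonelli) argument as the main proof since it is the most economical, perhaps with a one-line remark that Borel--Cantelli gives an alternative.
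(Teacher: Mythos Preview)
Your proposal is correct. The paper's own proof follows exactly your second route: Markov's inequality gives $\bP(|X_n|\ge\ve)\le\ve^{-p}\,\bE(|X_n|^p)$, the sum is finite by hypothesis, and Borel--Cantelli plus the arbitrariness of $\ve>0$ finishes it. Your preferred Tonelli argument --- showing $\bE\!\left(\sum_n |X_n|^p\right)<\infty$ and hence $|X_n|^p\to 0$ almost surely --- is an equally valid and slightly more economical alternative; it bypasses the $\ve\to 0$ step entirely, though it trades the elementary Borel--Cantelli lemma for an appeal to monotone convergence.
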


\medskip
\begin{proof}
Fix~$\ve>0$ arbitrarily. Since~$p\ge 1$,
\beqn
\bE(|X_n|\ge\ve) = \int 1_{\{|X_n|\ge\ve\}}\,\rd\bP \le \int \frac{|X_n|^p}{\ve^p} 1_{\{|X_n|\ge\ve\}}\,\rd\bP \le \ve^{-p}\, \bE(|X_n|^p).
\eeqn
Thus, by assumption,
\beqn
\sum_{n=1}^\infty \bE(|X_n|\ge\ve) \le \ve^{-p} \sum_{n=1}^\infty \bE(|X_n|^p) < \infty.
\eeqn
The Borel--Cantelli lemma now implies that $|X_n|<\ve$ for all large enough~$n$, almost surely. Since~$\ve>0$ was arbitrary,~$X_n$ converges to zero almost surely.
\end{proof}

\medskip
\begin{remark}\label{rem:as_conv}
In a typical application of Lemma~\ref{lem:as_conv}, $X_n = n^{-1}(Y_1+\dots+Y_n)$, where~$(Y_n)_{n\ge 1}$ is a sequence of integrable, centered, random variables, and~$p>2$ is necessary for the summability condition to be satisfied; for instance in the case of independent and identically distributed~$Y_n$, we have $\bE(|X_n|^2) = n^{-1} E(|Y_1|^2)$, which is not summable. When the random variables are not independent, it tends to be easier to bound absolute moments of even order, which in practice leads to~$p\ge 4$. 
\end{remark}



\medskip
\section{Proofs of Theorems~\ref{thm:main1}, \ref{thm:main2} and \ref{thm:unique}}\label{sec:proofs}

Here we prove Theorems~\ref{thm:main1} and~\ref{thm:main2} with the aid of Lemma~\ref{lem:fourth} and the preliminary results in Section~\ref{sec:preliminary}.


\medskip
\subsection{Proof of Theorem~\ref{thm:main1}}
Because~$f$ is bounded, the sequence $(t\mapsto\mu(\zeta_n(\slot,t)))_{n\ge 1}$ is uniformly Lipschitz continuous. By the convergence assumption~(A1) and Corollary~\ref{cor:Lip_extension_limit}, there is a Lipschitz extension of the function~$g$ such that
\beqn
\lim_{t\to\infty}\sup_{t\in[0,1]}|\mu(\zeta_n(\slot,t)) - g(t)| = 0.
\eeqn 
It remains to show that
\beqn
\lim_{n\to\infty}\sup_{t\in[0,1]}|\bar\zeta_n(x,t)| = 0
\eeqn
for almost every $x$ with respect to~$\mu$. Given~$x$, also the sequence $(t\mapsto\bar\zeta_n(x,t))_{n\ge 1}$ is uniformly Lipschitz continuous. By Lemma~\ref{lem:Lip_extension_limit}, it suffices to show that, given~$t\in B$,
\beqn
\lim_{n\to\infty}\bar\zeta_n(x,t) = 0
\eeqn
for almost every $x$ with respect to~$\mu$. To this end, we recall the summability assumption~(A2) and apply Lemma~\ref{lem:as_conv}. The proof of Theorem~\ref{thm:main1} is now complete.
\qed


\medskip
\subsection{Proof of Theorem~\ref{thm:main2}}
The proof builds on the following lemma. Imposing a condition on the fourth moment here (as well as in Theorem~\ref{thm:main2}) reflects the observation made in Remark~\ref{rem:as_conv}.

\medskip
\begin{lem}\label{lem:fourth}
Let~$f$ be a bounded function and~$\mu$ a measure. Under condition~(B3) of Theorem~\ref{thm:main2},
\beqn
\sum_{n=1}^\infty \mu(|\bar\zeta_n(\slot,t)|^4) < \infty
\eeqn
for all $t\in[0,1]$.
\end{lem}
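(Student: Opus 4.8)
The goal is to bound $\mu(|\bar\zeta_n(\slot,t)|^4)$ by something summable in $n$, using only the correlation decay hypothesis (B3). First I would write out the fourth moment as an integral:
\beqn
\mu(|\bar\zeta_n(\slot,t)|^4) = \int_{[0,t]^4} \mu\bigl(\bar f_{n,\round{ns_1}}\bar f_{n,\round{ns_2}}\bar f_{n,\round{ns_3}}\bar f_{n,\round{ns_4}}\bigr)\,\rd s_1\cdots\rd s_4,
\eeqn
and by symmetry of the integrand reduce to the region $s_1\le s_2\le s_3\le s_4$, picking up a factor $4!$. After the change of variables $s_i = k_i/n$ this becomes $n^{-4}$ times a sum over $0\le k_1\le k_2\le k_3\le k_4\le nt$ of the fourth-order moments $\mu(\bar f_{n,k_1}\bar f_{n,k_2}\bar f_{n,k_3}\bar f_{n,k_4})$, up to boundary terms of lower order that are harmless.

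The heart of the matter is to estimate a single ordered fourth moment $\mu(\bar f_{n,k_1}\bar f_{n,k_2}\bar f_{n,k_3}\bar f_{n,k_4})$ in terms of the gaps $k_2-k_1$, $k_3-k_2$, $k_4-k_3$. The standard trick: split at the largest of the three gaps. If the largest gap is $k_{j+1}-k_j$, then one uses the correlation bound (B3) with $\ell=4$ and that specific $j$ — note (B3) is assumed only for $j\in\{1,3\}$, so one must be slightly careful when the middle gap $k_3-k_2$ is the largest; in that case I would first telescope $\mu(\bar f_{n,k_1}\bar f_{n,k_2}\bar f_{n,k_3}\bar f_{n,k_4}) = \mu(\bar f_{n,k_1}\bar f_{n,k_2})\,\mu(\bar f_{n,k_3}\bar f_{n,k_4}) + c^{4,2}_n$, bound the first product using the $\ell=2$ case of (B3), and the cross term $c^{4,2}_n$ by $C\Phi(k_3-k_2)$. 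Iterating the $\ell=2,3$ bounds on the remaining factors, and using $|\bar f_{n,k}|\le 2\|f\|_\infty$ to absorb the leftover single/double factors, one gets
\beqn
|\mu(\bar f_{n,k_1}\bar f_{n,k_2}\bar f_{n,k_3}\bar f_{n,k_4})| \le C'\,\Phi\bigl(\max(k_2-k_1,\,k_3-k_2,\,k_4-k_3)\bigr),
\eeqn
with $C'$ depending on $C$ and $\|f\|_\infty$. Since $\Phi$ is non-increasing, the max in the argument can be bounded below by any one of the gaps, but using the max is what gives enough decay.

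Now I sum. Writing $a=k_2-k_1$, $b=k_3-k_2$, $c=k_4-k_3$, the number of tuples with given $(a,b,c)$ and $k_4\le nt$ is at most $n$. So
\beqn
\mu(|\bar\zeta_n(\slot,t)|^4) \le \frac{4!\,C'}{n^4}\cdot n \sum_{a,b,c\ge 0} \Phi(\max(a,b,c)) + (\text{lower order}) \le \frac{C''}{n^3}\sum_{m\ge 0}(m+1)^2\,\Phi(m),
\eeqn
where I used that the number of $(a,b,c)$ with $\max = m$ is $O(m^2)$. The key point is that $\sum_m m^2\Phi(m) < \infty$: indeed $m^2\Phi(m) = m(\log m)^{-2}$ for $m\ge 2$, which is \emph{not} summable — so this crude counting is too lossy. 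The fix, which is the main obstacle and must be handled with care, is that one cannot afford to bound by the single largest gap alone; instead one must retain decay in \emph{two} of the three gaps. Concretely, after splitting at the largest gap $g_1=k_{j+1}-k_j$ via (B3) to get a factor $\Phi(g_1)$ times a product of two lower-order moments, one of those lower-order moments (the one spanning two consecutive indices on one side) again decays like $\Phi$ of the gap between them — so one harvests $\Phi(g_1)\Phi(g_2)$ where $g_2$ is the second-largest gap. Then
\beqn
\mu(|\bar\zeta_n(\slot,t)|^4) \le \frac{C''}{n^3}\sum_{a,b,c\ge 0}\Phi(\mathrm{med}(a,b,c))\,\Phi(\max(a,b,c)) \le \frac{C'''}{n^3}\Bigl(\sum_{m\ge 0}(m+1)\Phi(m)\Bigr)\Bigl(\sum_{m\ge 0}\Phi(m)\Bigr),
\eeqn
and now $\sum_m m\,\Phi(m) = \sum_m (\log m)^{-2} \cdot \tfrac{m}{m}$... wait — $m\Phi(m)=(\log m)^{-2}$, still not summable. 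So even two factors is not quite enough with this $\Phi$; one genuinely needs all three gaps, i.e.\ to extract $\Phi(g_1)\Phi(g_2)\Phi(g_3)$ — but there are only two "cuts" available in a product of four terms, so the third factor of $\Phi$ must come from recognizing that after two splittings the residual two-point correlation $\mu(\bar f_{n,k}\bar f_{n,k'})$ with $k<k'$ adjacent in the ordering is itself the $\ell=2$ correlation $c^{2,1}_n(k,k')$, bounded by $\Phi(k'-k)$. Arranging the two splittings so that all three consecutive gaps each end up inside a surviving two-point correlation gives the clean bound
\beqn
|\mu(\bar f_{n,k_1}\bar f_{n,k_2}\bar f_{n,k_3}\bar f_{n,k_4})| \le C'\bigl[\Phi(a)\Phi(c) + \Phi(a)\Phi(b) + \Phi(b)\Phi(c)\bigr],
\eeqn
which after summation gives $\mu(|\bar\zeta_n(\slot,t)|^4) \le C''' n^{-3}\bigl(\sum_m \Phi(m)\bigr)\bigl(\sum_m (m+1)\Phi(m)\bigr)$ — and since $\sum_m(m+1)\Phi(m) = \sum_{m\ge 2}\frac{m+1}{m(\log m)^2} + O(1)$ diverges, even this fails; the honest conclusion is that one must keep the $n^{-3}$ against a single sum $\sum_m(m+1)^2\Phi(m)$ replaced by a genuinely convergent arrangement. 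I expect the correct bookkeeping is: the triple sum $\sum_{a+b+c\le nt}$ has only $O(n)$ room in the "total length" direction, so really $\mu(|\bar\zeta_n|^4) \le C n^{-3}\sum_{a,b,c\ge 0}\min\{\Phi \text{ over pairs}\}$, and one must choose, for each ordered tuple, the splitting that kills the two \emph{largest} gaps, leaving $\Phi(\max)\Phi(\mathrm{med})$; since $\Phi(\max)\le\Phi(\mathrm{med})$ and $\max\ge \tfrac13(a+b+c)$... Ultimately the clean way is:
\beqn
\sum_{a,b,c\ge0}\Phi(\max)\Phi(\mathrm{med}) \le 3\sum_{b\le a}\sum_{c\ge0}\Phi(a)\Phi(b)\mathbf{1}_{\{c\le a\}} \le 3\sum_{a\ge0}(a+1)\Phi(a)\sum_{b\ge0}\Phi(b),
\eeqn
which converges iff $\sum(a+1)\Phi(a)<\infty$ — false. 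Hence the \textbf{genuine main obstacle} is that a naive pairing does not suffice and one must exploit that $\sum_m \Phi(m)<\infty$ together with an extra factor of $n^{-1}$ hidden in the constraint; the actual proof in Section~\ref{sec:proofs} presumably organizes the fourth moment so that exactly \emph{one} unconstrained gap appears (contributing $\sum\Phi<\infty$), two gaps are each controlled by $\Phi$ (each contributing $\sum\Phi<\infty$ after a further constrained sum), and the remaining count contributes a bounded factor — giving $\mu(|\bar\zeta_n(\slot,t)|^4) = O(n^{-2})$, which is summable. Verifying that such an organization is possible using only $\ell\le 4$, $j\in\{1,\ell-1\}$ correlations is the crux, and I would devote the bulk of the argument to a careful case analysis (largest gap first vs.\ middle vs.\ last) establishing
\beqn
|\mu(\bar f_{n,k_1}\bar f_{n,k_2}\bar f_{n,k_3}\bar f_{n,k_4})| \le C\,\Phi(k_2-k_1)\,\Phi(k_4-k_3),
\eeqn
wait, that still leaves $k_3-k_2$ free, giving after summation $n^{-4}\cdot n \cdot (\sum\Phi)^2\cdot(nt) = O(n^{-2})$ — \emph{that} works: one free gap of size $O(n)$ contributes $n$, two $\Phi$-controlled gaps each contribute a finite sum, and the overall $n^{-4}$ leaves $n^{-2}$. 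So the target estimate is precisely the displayed two-$\Phi$ bound with the free gap being the \emph{middle} one, obtained by splitting off $\bar f_{n,k_1}$ from the right at gap $k_2-k_1$ (using $\ell=2,3,4$, $j=1$) and $\bar f_{n,k_4}$ from the left at gap $k_4-k_3$ (using $j=\ell-1$), with the middle factors estimated trivially by $\|f\|_\infty$. I would present this as the clean route, handle the "which side to split first" ordering carefully so that (B3)'s restriction $j\in\{1,\ell-1\}$ is respected at each step, and conclude $\sum_n \mu(|\bar\zeta_n(\slot,t)|^4)\le \sum_n C t^2 n^{-2}\bigl(\sum_{m\ge0}\Phi(m)\bigr)^2 < \infty$, uniformly in $t\in[0,1]$.
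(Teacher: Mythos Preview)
Your exploration correctly diagnoses that a single-gap bound, and even most two-gap combinations, fail to be summable with this particular~$\Phi$. The final target
\beqn
|\mu(\bar f_{n,k_1}\bar f_{n,k_2}\bar f_{n,k_3}\bar f_{n,k_4})| \le C\,\Phi(k_2-k_1)\,\Phi(k_4-k_3)
\eeqn
is, however, not a consequence of~(B3), and this is the genuine gap. Each invocation of~(B3) contributes an \emph{additive} error $C\Phi(\text{gap})$: splitting at $j=1$ shows the centered fourth moment equals an error term bounded by~$C\Phi(a)$, and splitting at $j=\ell-1$ shows the \emph{same quantity} is bounded by~$C\Phi(c)$. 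Two separate bounds give $C\min(\Phi(a),\Phi(c))$, never the product. Indeed the product bound is false under~(B3) alone: for a centered stationary Gaussian sequence with covariance~$\rho=\Phi$ one checks that all the~(B3)-type inequalities hold, yet by Isserlis' formula the fourth moment equals $\Phi(a)\Phi(c)+\Phi(a{+}b)\Phi(b{+}c)+\Phi(a{+}b{+}c)\Phi(b)$, which at $a=c=M$, $b=0$ is of order $\Phi(2M)\sim M^{-1}(\log M)^{-2}$, far larger than $\Phi(M)^2\sim M^{-2}(\log M)^{-4}$.

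The paper proceeds from the honest bound $C\min(\Phi(a),\Phi(c))$ via the elementary inequality $\min(u,v)\le\sqrt{uv}$. The fourfold integral then factorises as the square of $\int_0^1\!\int_0^s\Phi(\round{ns}-\round{nr})^{1/2}\,\rd r\,\rd s$. The point you were missing is that although~$\Phi^{1/2}$ is not integrable on~$\bR_+$, its primitive grows slowly enough: $\int_0^n\Phi(s)^{1/2}\,\rd s=O(n^{1/2}/\log n)$, whence the double integral is $O((n^{1/2}\log n)^{-1})$ and $\mu(|\bar\zeta_n(\slot,t)|^4)=O(n^{-1}(\log n)^{-2})$, which is summable. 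The $\min\to\sqrt{\,\cdot\,}$ step, rather than a multiplicative bound on the integrand, is what closes the argument --- and is exactly why~$\Phi$ carries the $(\log s)^{-2}$ factor.
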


\medskip
\begin{proof}
For brevity, we write $\ave{t} = \mu(f_{n,\round{nt}})$, $\ave{\bar t} = \mu(\bar f_{n,\round{nt}})$, $\ave{t\bar s} = \mu(f_{n,\round{nt}}\bar f_{n,\round{ns}})$, etc. For any $t_1,\dots,t_4\in[0,1]$, we have
\beqn
\begin{split}
\ave{\bar t_1 \bar t_2 \bar t_3 \bar t_4} 
& = \ave{t_1 \bar t_2 \bar t_3 \bar t_4} - \ave{t_1} \ave{\bar t_2 \bar t_3 \bar t_4}
\\
& = \ave{t_1 t_2 \bar t_3 \bar t_4} - \ave{t_2} \ave{t_1 \bar t_3 \bar t_4} - \ave{t_1} \ave{t_2 \bar t_3 \bar t_4} + \ave{t_1} \ave{t_2} \ave{\bar t_3 \bar t_4}
\\
& = \{\ave{t_1 t_2 t_3 t_4} - \ave{t_1 t_2 t_3} \ave{t_4} \}
- \ave{t_3} \{\ave{t_1 t_2 t_4} - \ave{t_1 t_2}\ave{t_4} \}
\\ 
& \qquad 
- \ave{t_2} \{\ave{t_1 t_3 t_4} - \ave{t_1 t_3}\ave{t_4} \}
+ \ave{t_2} \ave{t_3}  \{ \ave{t_1 t_4} - \ave{t_1} \ave{t_4} \}
\\ 
& \qquad 
- \ave{t_1} \{\ave{t_2 t_3 t_4} - \ave{t_2 t_3} \ave{t_4} \}
+ \ave{t_1} \ave{t_3}  \{\ave{t_2 t_4} - \ave{t_2} \ave{t_4} \}
\\ 
& \qquad 
+ \ave{t_1} \ave{t_2} \{\ave{t_3 t_4} - \ave{t_3}  \ave{t_4} \} .
\end{split}
\eeqn
Together with (B3), this shows that
\beq\label{eq:quad1}
|\ave{\bar t_1 \bar t_2 \bar t_3 \bar t_4}| \le C\Phi(\round{nt_4}-\round{nt_3}), \quad t_1\le t_2\le t_3\le t_4.
\eeq
It also shows that
\beqn
|\ave{\bar t_1 \bar t_2 \bar t_3 \bar t_4}| \le C\Phi(\round{nt_3}-\round{nt_4}), \quad t_1\ge t_2\ge t_3\ge t_4,
\eeqn
which is the same as
\beq\label{eq:quad2}
|\ave{\bar t_1 \bar t_2 \bar t_3 \bar t_4}| \le C\Phi(\round{nt_2}-\round{nt_1}), \quad t_1\le t_2\le t_3\le t_4.
\eeq
Combining~\eqref{eq:quad1} and~\eqref{eq:quad2} and reverting to original notation, we obtain
\beqn
|\mu(\bar f_{n,\round{nt_1}}\cdots \bar f_{n,\round{nt_4}})| \le C\min(\Phi(\round{nt_2}-\round{nt_1}),\Phi(\round{nt_4}-\round{nt_3})), \quad 0\le t_1\le t_2\le t_3\le t_4.
\eeqn

Next, using that the value of $\mu(\bar f_{n,\round{nt_1}}\cdots \bar f_{n,\round{nt_4}})$ remains unchanged under permutations of~$t_1,\dots,t_4$, and noting that $\min(a,b)\le a^\frac12b^\frac12$ for nonnegative~$a$ and~$b$, we see that
\beqn
\begin{split}
\mu(|\bar\zeta_n(\slot,t)|^4) & = \int_0^t \! \int_0^t \! \int_0^t \!  \int_0^t \mu(\bar f_{n,\round{nt_1}}\cdots \bar f_{n,\round{nt_4}})\,\rd t_1\,\rd t_2\, \rd t_3\, \rd t_4
\\
& = 4! \int_0^t \! \int_0^{t_4} \!\!\! \int_0^{t_3} \!\!\! \int_0^{t_2} \mu(\bar f_{n,\round{nt_1}}\cdots \bar f_{n,\round{nt_4}})\,\rd t_1\,\rd t_2\, \rd t_3\, \rd t_4
\\
& \le C \int_0^1 \! \int_0^{t_4} \!\!\! \int_0^{t_3} \!\!\! \int_0^{t_2} \min(\Phi(\round{nt_2}-\round{nt_1}),\Phi(\round{nt_4}-\round{nt_3})) \,\rd t_1\,\rd t_2\, \rd t_3\, \rd t_4
\\
& \le C \int_0^1 \!  \int_0^{t_4} \!\!\! \int_0^{1} \! \int_0^{t_2} \Phi(\round{nt_2}-\round{nt_1})^{\frac12}\Phi(\round{nt_4}-\round{nt_3})^{\frac12} \,\rd t_1\,\rd t_2\, \rd t_3\, \rd t_4
\\
& = C \int_0^1 \! \int_0^{t_4} \Phi(\round{nt_4}-\round{nt_3})^{\frac12} \, \rd t_3\, \rd t_4\, \int_0^{1} \! \int_0^{t_2} \Phi(\round{nt_2}-\round{nt_1})^{\frac12} \,\rd t_1\,\rd t_2.
\end{split}
\eeqn
We have thus shown
\beqn
\sup_{t\in[0,1]}\mu(|\bar\zeta_n(\slot,t)|^4) \le C \left(\,\int_0^1 \! \int_0^s \Phi(\round{ns}-\round{nr})^{\frac12} \,\rd r\,\rd s\right)^2.
\eeqn
But
\beqn
\begin{split}
 \int_0^1 \! \int_0^s \Phi(\round{ns}-\round{nr})^{\frac12} \,\rd r\,\rd s
& = n^{-2} \int_0^n \! \int_0^s \Phi(\round{s}-\round{r})^{\frac12} \,\rd r\,\rd s
\\
& = n^{-2} \int_0^n \! \int_0^{s} \Phi(r)^{\frac12} \,\rd r\,\rd s + O(n^{-1})
\\
& = n^{-2} \int_0^n (n-s) \Phi(s)^{\frac12} \,\rd s + O(n^{-1})
\\
& \le n^{-1} \int_0^n \Phi(s)^{\frac12} \,\rd s + O(n^{-1})
\\
& = O((n^\frac12\log n)^{-1}) .
\end{split}
\eeqn
The last line above is due to the fact that the function $\Psi(s) = s^{\frac12}(\log s)^{-1}$ satisfies
\beqn
\frac{d}{ds}\Psi(s) = \frac{\log s-2}{2s^{\frac12}(\log s)^2} \ge \frac{1}{2s^{\frac12}\log s} = \tfrac12\Phi(s)^\frac12, \quad s\ge 2,
\eeqn
so
\beqn
\int_0^n \Phi(s)^{\frac12} \,\rd s
\le 2\Phi(0)^\frac12 + 2\Psi(n) - 2\Psi(2) = O(n^\frac12(\log n)^{-1}).
\eeqn
Hence,
\beqn
\sup_{t\in[0,1]}\mu(|\bar\zeta_n(\slot,t)|^4) \le \frac{C}{n(\log n)^2}, \quad n\ge 2.
\eeqn
Observing finally that $\bar\zeta_1(x,t) = t(f(x)-\mu(f))$ implies
\beqn
\sup_{t\in[0,1]}\mu(|\bar\zeta_1(\slot,t)|^4) \le (2\|f\|_\infty)^4,
\eeqn
the proof is complete.
\end{proof}

\medskip
\begin{remark}
It may be of some interest to note that the derivation of the upper bound $\int_0^1 \! \int_0^s \Phi(\round{ns}-\round{nr})^{\frac12} \,\rd r\,\rd s = O((n^\frac12 \log n)^{-1})$ in the proof of Lemma~\ref{lem:fourth} appears somewhat crude, but it gives the right order of magnitude. Indeed, fixing a small $\ve>0$, we also get the lower bound 
\beqn
n^{-2} \int_0^n (n-s) \Phi(s)^{\frac12} \,\rd s 
> n^{-2} \int_{(1-\ve)n}^n (n-s) \Phi(s)^{\frac12} \,\rd s
= \int_{(1-\ve)}^1 (1-s) \Phi(ns)^{\frac12} \,\rd s,
\eeqn
where $\Phi(ns)^{\frac12} = (n^\frac12 \log n)^{-1} (1+O(\ve/\log n)) s^{-\frac12}$ for $s\in[1-\ve,1]$.
\end{remark}

\medskip
Let us proceed with the proof of Theorem~\ref{thm:main2}. By Lemma~\ref{lem:fourth}, (B3)~of Theorem~\ref{thm:main2} implies~(A2) of Theorem~\ref{thm:main1}. On the other hand, since~(B1) and~(B2) hold, the bounded convergence theorem yields~(A1) with~$g = \zeta$. Hence, the conditions of Theorem~\ref{thm:main1} are satisfied, and we conclude that
\beqn
\lim_{n\to\infty}\sup_{t\in[0,1]}|\zeta_n(x,t)-\zeta(t)| = 0
\eeqn
for almost every $x$ with respect to $\mu$. This finishes the proof of Theorem~\ref{thm:main2}.
\qed


\medskip
\subsection{Proof of Theorem~\ref{thm:unique}}\label{sec:unique_proof}
Write $\mathscr{P} = (\hat\mu_t)_{t\in[0,1]}$ and $\mathscr{P}' = (\hat\mu'_t)_{t\in[0,1]}$. 
 By the simultaneous genericity of some point~$x$ for both physical families of measures, given a bounded continuous~$f$,~\eqref{eq:physical_QDS} implies
\beqn
\int_a^b \hat\mu_s(f)\,\rd s = \int_a^b \hat\mu_s'(f)\,\rd s, \quad a,b\in[0,1].
\eeqn
By the Lebesgue differentiation theorem, there exists a Borel set $B_f \subset [0,1]$ of Lebesgue measure~$1$ such that $\hat\mu_t(f) = \hat\mu_t'(f)$ for all $t\in B_f$. Let $\{U_i:i\in\bN\}$ be a countable base for the topology. Given $i\in\bN$, there exists a sequence of bounded continuous functions $f_{i,n}:X\to\bR$, $n\ge 1$, such that~$f_{i,n}$ converges pointwise to the indicator function of~$U_i$ as~$n\to\infty$. Then the bounded convergence theorem yields $\hat\mu_t(f_{i,n})\to\hat\mu_t(U_i)$ and~$\hat\mu_t'(f_{i,n})\to\hat\mu_t'(U_i)$ for all $t\in[0,1]$, so
\beq\label{eq:agree_on_basis}
\hat\mu_t(U_i) = \hat\mu_t'(U_i)
\eeq
for all $t\in B_i = \cap_{n\ge 1} B_{f_{i,n}}$ and $i\in\bN$. Thus, for $t\in B = \cap_{i\in\bN} B_i$ the measures~$\hat\mu_t$ and~$\hat\mu_t'$ coincide, because they agree on the base. Since each~$B_i$ has Lebesgue measure~$1$, the same is true of~$B$.
\qed


\medskip


\bigskip
\bigskip
\bibliography{Quasistatic}{}
\bibliographystyle{plainurl}


\vspace*{\fill}

\end{document}